\documentclass[reqno]{amsart}
\usepackage[utf8]{inputenc}
\usepackage{enumitem}

\usepackage{amsmath,amssymb,amsbsy,amsfonts,amsthm,latexsym,amsopn,amstext,mathtools,leftidx,
            amsxtra,euscript,amscd,verbatim,epsf,colordvi,graphics}
\usepackage{ytableau}
\usepackage{color}
\usepackage{pgf,tikz}
\usetikzlibrary{decorations.pathreplacing}
\usetikzlibrary{arrows}
\usepackage{mathrsfs}
\usepackage{graphicx}

\usepackage{hyperref}

\usepackage{cleveref}
\usepackage[margin=11pt,font=footnotesize,labelfont=bf]{caption}
\newtheoremstyle{tplain}{3pt}{3pt}{\rmfamily}{}{\bfseries}{.}{0.5em}{}
\theoremstyle{tplain}

\usepackage{pgf,tikz}
\usepackage{tcolorbox}
\usepackage[enableskew]{youngtab}
\definecolor{darkgreen}{cmyk}{1,0,1,0}
\newtheorem{thm}{Theorem}
\newtheorem{lem}{Lemma}
\newtheorem{ex}{Example}
\newtheorem{cor}{Corollary}
\newtheorem{prop}{Proposition}
\newtheorem{obs}{Remark}
\newtheorem{defi}{Definition}
\def \cal{\mathcal}
\def \rm {\mathrm}
\def \mbf {\mathbf}

\def\rev{{\rm rev}}
\def\Z { \mathbb{Z}}
\def\C { \mathbb{C}}
\def\LRAII {\mathsf{LR}}
\def \r {\mathbf{r}}
\def \t {\mathbf{t}}
\def \redu {\mathrm{red}}
\def\c { \mathrm{c}}
\def \k {\mathfrak{k}}
\def\wt { \mathrm{wt}}
\def\shape { \mathrm{shape}}
\def\rem { \mathrm{rem}}
\def\suc { \mathrm{suc}}
\def \s {\mathbf{s}}
\def \a {\mathbf{a}}
\def \A {\mathbf{A}}
\def \b {\mathbf{b}}
\def \c {\mathbf{c}}
\def \d {\mathbf{d}}
\def \x {\mathbf{x}}
\def \SST {SST_{2n}}
\def \SpT {SpT_{2n}}

\newcommand{\oo}{\color{blue}}
\newcommand{\blue}{\color{blue}}
\newcommand{\red}{\color{red}}
\newcommand{\green}{\color{green}}
\newcommand{\bro}{\color{brown}}

\newcommand{\ora}{\color{orange}}

\makeatletter
\newcommand*\bigcdot{\mathpalette\bigcdot@{.5}}
\newcommand*\bigcdot@[2]{\mathbin{\vcenter{\hbox{\scalebox{#2}{$\m@th#1\bullet$}}}}}
\makeatother

\newcommand*\circled[1]{\tikz[baseline=(char.base)]{
            \node[shape=circle,draw,inner sep=1pt] (char) {#1};}}

\newcommand{\YT}[3]{
\vcenter{\hbox{
\begin{tikzpicture}[x={(0in,-#1)},y={(#1,0in)}] 
\foreach \rowi [count=\i] in {#3} {
 \foreach \e [count=\j] in \rowi {
  \draw (\i,\j) rectangle +(-1,-1);
  \draw (\i-0.5,\j-0.5) node {$#2\e$};
 }
}
\end{tikzpicture}
}}
}

\title[Inverse reduction map] {
The inverse  reduction map of a symplectic column by decreasing the rank by one }

\author{Olga Azenhas}
\address{ University of Coimbra, CMUC, Department of Mathematics, Portugal}
\email{oazenhas@mat.uc.pt}

\keywords{   quantum Littlewood-Richardson bijection, reduction map, inverse reduction map, combinatorial $R$-matrices }

\subjclass[2000]{05E05, 05E10, 05E14, 17B37, 68Q17}

\begin{document}

\begin{abstract} 
We have previously given a factorization of a symplectic column under the action of the parity involution which enabled to
 explicitly have
written the inverse of the reduction map  in the quantum Littlewood-Richardson bijection.
Watanabe has written the reduction map as  a composition of several maps, among them, combinatorial $R$-matrices on single columns and a reduction map  on a shorter  column with rank reduced by one.
We now use this approach to  write the inverse of the reduction map on a symplectic column by detecting the corresponding symplectic column of rank reduced by one and thus avoiding going through several map compositions.
\end{abstract}
\maketitle

\tableofcontents

\section{Introduction}
Given   $\lambda \in Par_{\le 2n}$  a partition with at most $2n$ parts,
the quantum Littlewood-Richardson (LR) map $LR^{AII}$ \cite[Theorem 3.1.4]{watanabe}
is an one-to-one
assignment of a semi-standard tableau $T$ of shape $\lambda$  to a pair $(P^{II}(T), Q^{II}(T)) $   consisting of a symplectic tableau in $ SpT_{2n}(\mu)$,  for some shape $ \mu\in Par_{\le n}$, a partition with at most $n$ parts, where $\mu\subseteq \lambda$, and   a recording tableau of skew shape $\lambda/\mu$ in $Rec_{2n}(\lambda/\mu)$,

\begin{align} \label{lrsII}
 SST_{2n}(\lambda) \overset{\sim}\longrightarrow
\bigsqcup_{\begin{smallmatrix}\mu\in Par_{\le n}\\
\mu\subseteq\lambda\\
Q\in Rec_{2n}(\lambda/\mu)\end{smallmatrix}}SpT_{2n}(\mu) \times \{Q\}.
\end{align}

The set of recording tableaux $Rec_{2n}(\lambda/\mu)$  is in natural bijection \cite{watanabe, azreco} with the set of LR-Sundaram tableaux  $LRS_{2n}(\lambda/\mu)$ \cite{sundaram},
\begin{align}\label{record}Rec_{2n}(\lambda/\mu) \overset{\sim}\longrightarrow LRS_{2n}(\lambda/\mu).
\end{align}

For $T\in  SST_{2n}(\lambda)$, the computation of the symplectic tableau $P^{II}(T)\in  SpT_{2n}(\mu)$ requires in the first step the action of the   reduction map, $\redu$, on its first column, say ${\bf a}\in SST_{2n}(\varpi_l)$ with $l$ the length of $\lambda$.
The action of  the reduction map, $\redu$,  to $\bf a$, returns a symplectic column $\redu({\bf a})\in SpT_{2n}(\varpi_t)$  for some
$t\in[0,n]$ such that $0\le t\le min\{l,2n-l\}$ and $l-t\in 2\Z$ \cite[Proposition 4.3.6.]{watanabe}. 
Next by column Schensted insertion \cite{fulton, stanley}, insert $\redu({\bf a})$ into $T'$, the tableau $T$ with the column $\a$ suppressed, $(T'\leftarrow \redu({\bf a}))$, to get the successor of $T$, $\suc(T)$,
\begin{align}T\mapsto  (\redu( {\bf a}), T')\mapsto \suc(T):=
\redu({\bf a})\otimes T'=(T'\leftarrow\redu({\bf a}) ).
\end{align}
If $\suc(T)$ is not a symplectic tableau it means that its first column is not symplectic and   the previous procedure is iterated. 
The  successor will have  strictly less   cells otherwise the successor stabilizes which means that its first column is symplectic. The termination of the procedure  in a finite number of steps is guaranteed by the property $\redu(\a)=\a$ if and only if $\a$ is a symplectic column.

 The  computation of the reduction map on a column $\a\in \SST(\varpi_l)$ 
is defined via the removal operation, $\rem$, on the column $\a$, \cite[Definition 2.5.1]{watanabe} together with their  properties \cite[Proposition 4.2.2, 4.2.7,  4.2.8]{watanabe} in the sense that $\redu({\bf a})={\bf a} \setminus \rem({\bf a})$.
The set $\rem({\bf a})$ of removal entries of $\a$ consists of certain  pairs of entries, an odd entry followed  with an even entry (see Proposition \ref{prop:rem}), to be removed from $\bf a$ so that the resulting column is symplectic.  The inverse of the reduction map consists in adding such pairs to a symplectic column so that they can be removed \cite{azreduction}

To prove the injectivity of the reduction map, Watanabe has written the reduction map as  composition of several maps \cite[Corollary 4.4.3]{watanabe}, notably among them $R$-combinatorial matrices on single columns \cite{rmatrix} and a reduction map acting on  columns of rank reduced by one (see Subsection \ref{sec:Rcombinatorial}):
For a fixed $n\in\mathbb{N}$, let $l\in[0,2n]$. The map

\begin{align}\redu=\redu_l:SST_{2n}(\varpi_l)&\rightarrow\bigsqcup_{\begin{smallmatrix} 0\le t\le min\{l,2n-l\}\\
l-t\in 2\mathbb{Z}
\end{smallmatrix}}SpT_{2n}(\varpi_t), \quad {\bf a}\mapsto \redu({\bf a})\nonumber\\
\label{redumapthm}
\end{align}
is injective and is inductively defined as follows
\begin{enumerate}
\item for $l=0,1$, either $t=0$ or respectively $t=1$, in which cases $\redu=id$,
    \item for $l= 2$, if ${\bf a}=(1,2)$, $\redu(\bf a)=()$ and $t=0$, otherwise $\redu=id$ and $t=2$,
    \item for $l\ge 3$,
\begin{align}\label{decomposereduthm}\redu=\pi\circ\bigwedge\circ(K^{-1},id)\circ R_{t-1,2n-1}\circ(\redu_{l-1},id)\circ R_{2n-1,l-1}\circ (K,id)\circ \bigvee, 
\end{align}
\end{enumerate} 
where $\redu_{l-1}$ is the reduction map acting on $\SST(\varpi_{l-1})$, $R_{t-1,2n-1}$ and $R_{2n-1,l-1}$ are $R$ combinatorial matrices acting on single columns, and  $ \bigvee,\bigwedge$ are operators acting on a column in $SST_{2n}(\varpi_l)$ splitting it into two parts so that the last entry is detached respectively adjoining an entry to the bottom of a column in $SpT_{2n}(\varpi_{t-1})$, and $K$ is an operator based on the parity involution $\s$, Definition \ref{s}, and the $\vee$-operator $.^\vee$ Section \ref{sec:final} (see also \cite[Sections 4.3,~4.4]{watanabe}).

Recall that the parity involution $\s$, Definition \ref{s}, swaps an even number with the previous odd number and an odd number with the next even number. In \cite[Theorem 1]{azreduction},
we have decomposed  a symplectic column by detaching  its  maximal nonempty intervals that are fixed by the parity involution $\s$, this  is the part of the part of the symplectic column that is fixed by that involution, while the remaining  part consists of the subword whose image under the parity involution is disjoint of the symplectic column. The decomposition is uniquely determined by the symplectic column and the symplectic conditions satisfied by those pieces encode the needed information to explicitly write the inverse of the reduction map  in the quantum Littlewood-Richardson bijection.
The surjectivity and the inverse of the inverse of reduction map were solved in this way in \cite[Theorem 7]{azreduction}.

Here we consider a different approach to the  reduction map $\redu$ \eqref{redumapthm} namely as a composition of the maps in \eqref{decomposereduthm} and prove in this framework that $\redu$ is surjective in  Proposition \ref{prop:surjective}. The proof is lengthy and detailed while providing additional insight. As a bonus we get in Theorem \ref{th:main} an explicit  presentation of the inverse reduction map on a symplectic column by detaching the corresponding symplectic column of rank reduced by one. This avoids the need to go through the composition of several maps, by detaching the corresponding symplectic column of rank reduced by one. In this presentation, the computation of the inverse reduction  map on a symplectic column essentially reduces to the computation of the inverse of the reduction map on a symplectic column of rank reduced by one.

\bigskip


\subsection{Organization}  The paper is organized into four sections. Sections \ref{sec:containmentmain} and \ref{sec:redu} provide the necessary definitions on symplectic tableaux and columns, removal operations to define the reduction map and the inverse reduction map.
Section \ref{sec:final}  is the main one where we establish the new results in Proposition \ref{prop:surjective} and in  Main Theorem \ref{th:main}. Last subsection illustrates the Main Theorem.
\section*{Acknowledgements}
The author acknowledges financial support by the Centre for Mathematics of the University of Coimbra (CMUC, https://doi.org/10.54499/UID/00324/2025) under the Portuguese Foundation for Science and Technology (FCT), Grants UID/00324/2025 and UID/PRR/00324/2025.

\section{Symplectic tableaux and symplectic columns}\label{sec:containmentmain}

 Throughout we fix the following notation. Given $a,b\in \Z$, $[a,b]:=\{x\in \Z: a\le x\le b\}$. When $a\in\Z^+$, we just write $[a]:=[1,a]$.
Given $l\ge 0$, $\varpi_l:=(\underbrace{1,\dots,1}_l)\in \Z^l$.
\subsection{Preliminaries}
A \emph{partition} $\lambda$ is a weakly decreasing sequence of nonnegative integers $\lambda_1\ge \lambda_2 \ge \cdots$
such that $\lambda_k =0$ for some $k \ge 1$. The   maximal $i$ such that $\lambda_i> 0$ is called the \emph{number}
of \emph{parts} or \emph{length} of $\lambda$, denoted $\ell(\lambda)$.
For each $m\ge 0$, the set of partitions of length at most $m$ is denoted by \emph{$Par_{\le m}$}. We assume the inclusion $Par_{\le m}\subseteq Par_{\le k}$ whenever $k\ge m$. Thus
We often write the partition $\lambda$ as a vector
 $\lambda = (\lambda_1, \lambda_2, \dots,\lambda_k)$ for $k \ge  \ell(\lambda)$. The empty partition is the empty sequence $()$ and is regarded as the unique partition of length zero.

 A partition $\lambda$ is identified
with its \emph{Young diagram} $D(\lambda)$ which is a left and top justified collection of boxes (or cells)
with $\lambda_k$ many boxes in the $k$th row for all $k \in \mathbb{Z}_> 0$. In particular, the empty Young diagram and  the  partition $()$ are identified. The number of cells of $D(\lambda)$ is  the sum of the parts of $\lambda$ and is denoted by $|\lambda|$.
The boxes or cells of the Young diagram of $\lambda$ are identified by its  coordinates $(i,j)$ in the matrix style, that is, $1\le i\le \ell(\lambda)$ and $1\le j\le \lambda_i$.

 A \emph{
tableau} $T$ of  shape $\lambda$ is a map (or a filling of $D(\lambda)$)
 $$T:D(\lambda)\rightarrow \mathbb{Z}_>0, \; (i,j)\mapsto T(i,j),$$
assigning a positive integer to each box of $D(\lambda)$. 
We say that the tableau $T$ is \emph{semi-standard} if in addition the assignment is
such that it is weakly increasing as we go from left to right along a row and strictly
increasing as we go from top to bottom along a column, 
\begin{align}&T(i,j)\le T(i,j+1),\; T(i,j)<T(i+1,j), \mbox{ for all $(i,j)\in D(\lambda)$,}\nonumber
\end{align}
where we set $T(a,b):=\infty$ if $(a, b) \notin D(\lambda)$.
 Usually $T(i,j)$ is just referred as
the entry in the box $(i,j)$. 
A positive integer
$m\ge \ell(\lambda)$ will be fixed and $[m]$ will be used as a co-domain for the map $T$. We call
$[m]:=\{1, \dots, m\}$ the alphabet of the semi-standard tableau $T$.
 In this case, we will denote the \emph{set} of \emph{semi-standard} \emph{tableaux} of \emph{shape} $\lambda$ by
$SST_m(\lambda)$. 
The \emph{weight} or \emph{content} of $T$ is the nonnegative vector $\mathrm{wt}(T)=(T[1],\dots,  T[m])$, where $T[i]:=\#\{(a,b)\in D(\lambda):T(a,b)=i\}$ for $i\in[m]$, that is, $T[i]$ is  the number of occurrences of $i$ in the tableau $T$.

\subsection{Symplectic tableaux and symplectic columns}
We fix $n\in\mathbb{N}$.
We first need the definition of the \emph{parity} (\emph{swapping}) \emph{involution}.

\begin{defi}\label{def:s}For each $x\in\mathbb{Z}$, set \cite{watanabe} \begin{align}\label{s} \s(x)=\begin{cases}
x+1, &  \mbox{ if } x\notin 2\mathbb{Z}, \\
x-1, &  \mbox{ if }  x\in 2\mathbb{Z}.
\end{cases}
\end{align}
\end{defi}
Indeed $\s^2(x)=x$, for each $x\in \Z$. We call it the parity (swapping) involution.

\begin{defi}\cite{king76} \label{def:symp}A semistandard tableau $G \in SST_{2n}(\lambda)$ is said to
be symplectic if
$$G(k, 1) \ge 2k-1, \mbox{  for all $  k \in  [ \ell(\lambda)]$}.$$
Let $SpT_{2n}(\lambda)$ denote the set of all symplectic tableaux of shape $\lambda$ on the alphabet $[2n]$.
\end{defi}

\begin{prop} \label{prop:symplectic1}  
Let $G \in SST_{2n}(\lambda)$.
\begin{enumerate}
\item\cite[Proposition 2.4.2]{watanabe} If $SpT_{2n}(\lambda)\neq\emptyset$ then $\ell(\lambda)\le n$.
\item \cite{azreduction}If $G=(a_1,\dots,a_t)\in SpT_{2n}(\varpi_t)$ then $t\le n$ and
$t\le \lfloor\frac{a_t+1}{2}\rfloor$.
\item \cite[Lemma 2.4.3.]{watanabe} If $G$ is not symplectic, then there exists a unique
$i \in[2, 2n]$ such that
\begin{align}G(i, 1) < 2i -1 \mbox{ and } G(k, 1) \ge 2k -1  \mbox{ for all $k \in [1, i -1 ]$}.\label{G1}\end{align}
Moreover, we have
\begin{align}G(i - 1, 1) = 2i - 3=2(i-1)-1  \mbox{  and $G(i, 1) = 2i -2=2(i-1)$ }.\label{G2}\end{align}
\item \cite{azreduction}\label{crucialpoint}If the first column $G$ has no consecutive integers consisting of an odd number followed with an even number, that is, the first column of $G$ does not contain any interval of the form $[x<\s(x)]$, then  $G$ is symplectic.
    \item \cite{azreduction}\label{conseqcrucialpoint}$SpT_{2n}(\varpi_1)=SST_{2n}(\varpi_1)$ and $SpT_{2n}(\varpi_2)=SST_{2n}(\varpi_2)\setminus\{(1,2)\}$.
\end{enumerate}
\end{prop}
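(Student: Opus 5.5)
The proposition has five items, three of them cited (items 1 and 3 from \cite{watanabe}). I will prove each item directly from Definition \ref{def:symp}, which only constrains the first column through $G(k,1)\ge 2k-1$, using strictness of columns. The order of proof will be (2), (1), (3), (4), (5), so that each item can reuse the earlier ones.

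\emph{Item (2).} Let $G=(a_1<\dots<a_t)$ be symplectic. Then $2t-1\le a_t\le 2n$, so $t\le n+\tfrac12$, i.e.\ $t\le n$. Also $a_t\ge 2t-1$ gives $t\le (a_t+1)/2$. Since $t$ is an integer, $t\le\lfloor (a_t+1)/2\rfloor$.

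\emph{Item (1).} Apply item (2) to the first column of any $G\in SpT_{2n}(\lambda)$. That column is itself a symplectic column of length $\ell(\lambda)$, so $\ell(\lambda)\le n$.

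\emph{Item (3).} If $G$ is not symplectic, the set of $k$ with $G(k,1)<2k-1$ is nonempty; let $i$ be its minimum, which makes $i$ unique. Since $G(1,1)\ge 1$, we have $i\ge 2$. Moreover $i\le \ell(\lambda)\le 2n$, because a strict column in $[2n]$ has length at most $2n$. Minimality gives $G(i-1,1)\ge 2i-3$. Strictness of the column gives $G(i,1)\ge G(i-1,1)+1\ge 2i-2$. Combined with $G(i,1)<2i-1$, this forces $G(i,1)=2i-2$, and then $2i-3\le G(i-1,1)\le 2i-3$, which is \eqref{G2}. This item is the only one with real content, and even here the whole argument is this squeeze.

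\emph{Item (4).} I argue by contrapositive. If $G$ is not symplectic, item (3) produces consecutive entries $2i-3,2i-2$ in the first column. These are an odd number followed by the next even number, i.e.\ an interval $[x<\s(x)]$ with $x=2i-3$. This contradicts the hypothesis.

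\emph{Item (5).} A single entry $a_1\ge 1$ is always symplectic, so $SpT_{2n}(\varpi_1)=SST_{2n}(\varpi_1)$. For $(a_1<a_2)$, the column is symplectic if and only if $a_2\ge 3$. The only column with $a_2\le 2$ is $(1,2)$, so $SpT_{2n}(\varpi_2)=SST_{2n}(\varpi_2)\setminus\{(1,2)\}$. Alternatively, this follows from item (3) with $i=2$.
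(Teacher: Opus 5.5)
Your proof is correct. The paper gives no proof of this proposition at all: it only lists the five items, citing \cite{watanabe} for items (1) and (3) and \cite{azreduction} for items (2), (4) and (5). So there is no competing argument in the text to compare against. What you supply is a self-contained derivation directly from Definition \ref{def:symp}, and it removes the dependence on the references at essentially no cost in length.

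As you say, the only substantive step is the squeeze at the minimal violating index in item (3):
\begin{itemize}
\item $G(i-1,1)\ge 2i-3$ by minimality;
\item $G(i,1)\ge G(i-1,1)+1$ by strictness of the column;
\item $G(i,1)\le 2i-2$ by the violation.
\end{itemize}
Item (4) is then its contrapositive, which exhibits the odd--even pair $2i-3,2i-2$. Item (5) is the case $i=2$, which forces the column $(1,2)$.

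Two cosmetic points. In item (2), the bound $t\le\lfloor (a_t+1)/2\rfloor$ only makes sense for $t\ge 1$; for $t=0$ the claim $t\le n$ is trivial. For uniqueness in item (3), say explicitly that any $i$ satisfying \eqref{G1} is automatically the least index violating the symplectic condition. That is why it coincides with the minimum you chose.
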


\begin{cor}\cite{azreco, azreduction}\label{symplecticcolumn} The following holds:
\begin{enumerate}
\item[(a)] For $0\le t\le n$, $SpT_{2n}(\varpi_t)\neq \emptyset$. Namely,  $SpT_{2n}(())=\{()\}$, and,  for $1\le t\le n$, $G=(1,3, \dots,2t-1)\in SpT_{2n}(\varpi_t)$ or $H=(2,4,\dots, 2t)\in SpT_{2n}(\varpi_t)$. 
\item [(b)]  $SpT_{2n}(\lambda)\neq\emptyset$ if and only if  $\ell(\lambda)\le n$.

\item [(c)] If $G'$ is obtained from $G\in  SpT_{2n}(\varpi_t)$ by suppressing
$0\le t_0\le t$ entries then $G'\in SpT_{2n}(\varpi_{t-t_0})$ and the suppressed part $G''\in SpT_{2n}(\varpi_{t_0})$.

 \item [(d)]
  Let $\lambda\in Par_\le n$,
 $$SpT_{2n}(\lambda)=\{S\in SST_{2n}(\lambda)| (S(1,1),\cdots,S(\ell(\lambda),1)\in SpT_{2n}(\varpi_{\ell(\lambda)}) \}.$$
where $S(i,1)$ indicates the entry  in row $i$ and column $1$ of $S$, for $1\le i\le \ell(\lambda)$.
 \end{enumerate}
\end{cor}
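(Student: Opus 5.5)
The statement to prove is Corollary \ref{symplecticcolumn}, items (a)--(d). Each item follows from Definition \ref{def:symp} and Proposition \ref{prop:symplectic1}, so I take them in order.

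For (a), the empty column is trivially symplectic. For $1\le t\le n$, the column $G=(1,3,\dots,2t-1)$ is strictly increasing with largest entry $2t-1\le 2n$, so it lies in $SST_{2n}(\varpi_t)$. Its $k$-th entry equals $2k-1$, so the condition $G(k,1)\ge 2k-1$ holds. For $H=(2,4,\dots,2t)$ the $k$-th entry is $2k>2k-1$, and $2t\le 2n$.

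For (b), the forward direction is Proposition \ref{prop:symplectic1}(1). For the converse, take $\ell(\lambda)\le n$ and fill row $k$ of $D(\lambda)$ entirely with $2k-1$. Rows are then constant, and columns are strictly increasing with entries at most $2\ell(\lambda)-1\le 2n$. So the filling is semistandard, and its first column is $(1,3,\dots)$, which is symplectic.

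For (c), I argue directly from the definition. Let the kept entries be $G(i_1,1)<\dots<G(i_{t-t_0},1)$ with $i_1<\dots<i_{t-t_0}$. Since the row indices are strictly increasing, $i_k\ge k$. Hence the $k$-th entry of $G'$ satisfies $G(i_k,1)\ge 2i_k-1\ge 2k-1$. The kept entries remain strictly increasing and in $[2n]$, so $G'\in SpT_{2n}(\varpi_{t-t_0})$. The same argument applies to the suppressed part $G''$, with $t_0$ in place of $t-t_0$.

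For (d), Definition \ref{def:symp} involves only the first-column entries $S(k,1)$. So $S\in SST_{2n}(\lambda)$ is symplectic if and only if its first column, read as a one-column tableau of shape $\varpi_{\ell(\lambda)}$, is symplectic. This is exactly the equality in (d).

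No step is a real obstacle. The only points needing care are the index inequality $i_k\ge k$ in (c) and checking in (b) that the constructed filling is semistandard.
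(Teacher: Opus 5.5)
Your proof is correct: each item is a direct check against Definition \ref{def:symp}, and you handle properly the two points that need care, the bound $i_k\ge k$ in (c) and the row-constant filling by $2k-1$ in (b). The paper gives no proof of this corollary (it only cites \cite{azreco, azreduction}), so there is no route to compare with; your direct argument is the natural one, and the forward direction of (b) also needs no citation, since $2\ell(\lambda)-1\le S(\ell(\lambda),1)\le 2n$ already forces $\ell(\lambda)\le n$.
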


\section{The   reduction map}\label{sec:redu}
For the reader convenience this section recalls several properties of removal and reduction maps in \cite{watanabe} and \cite{azreco}.
We fix $l \in [0, 2n]$ and $\mathbf{a} = (a_1, \dots , a_l)$ a column  in $SST_{2n}(\varpi_l)$.
Recall often  $\mathbf{a}$ is regarded as a set or its reading word top to bottom.
The \emph{removal} \emph{subword }of $\mathbf{a}$ \cite{watanabe} is defined to be the subword $\mathrm{rem}(\mathbf{a})$ of  $\mathbf{a}$ obtained by the following recursive formula:

\begin{align}\label{removal}
\mathrm{rem}(\mathbf{a}) :=
\begin{cases}
\emptyset,& \mbox{ if } l\le 1,\\
\mathrm{rem}(a_l,\dots, a_{l-2})  (a_{l-1},a_l),& \mbox{ if } l\ge 2, a_l\in   2\mathbb{Z}, a_{l-1} = a_l-1,\\
 &\mbox{ and }
a_l < 2l -|\rem(a_1,\dots,a_{l-2})|-1,\\
\rem(a_1,\dots,a_{l-1})& \mbox{ otherwise}.
\end{cases}
\end{align}

\begin{defi}\cite{watanabe} \label{def:rem} For the column
$\mathbf{a}$, the new column $\mathrm{red}(\mathbf{a})$, reduction of $\mathbf{a}$, is defined to be the one obtained from $\mathbf{a}$ by removing the entries in
the set $\mathrm{rem}(\mathbf{a})$,
$$\redu( \mathbf{a})=\mathbf{a}\setminus \rem(\mathbf{a})\in SST_{2n}(\varpi_{l-|\rem(\mathbf{a})|}).$$
\end{defi}
In fact  Proposition \ref{prop:redu} below,  \cite[Section 4.3, Proposition 4.3.6]{watanabe}, shows that $\redu( \mathbf{a})\in SpT_{2n}(\varpi_{l-|\rem(\mathbf{a})|})$, where $l-|\rem(\mathbf{a})|$, satisfy further conditions.
We first recall  useful properties of removable entries in $\mathbf{a}$. In particular, we highlight the parity swapping involution $\s$ with a meaningful role in the removal of pairs and consequently in the reduction map and  in our computation of the inverse reduction map $\redu$.

In the next proposition points $(2)$ and $(3)$ together  are an alternative to the usage of Definition \ref{def:rem} to compute $\rem(\bf a)$, and providing an explicit recognition of the removal pairs.

\begin{prop}\cite[Proposition 4.2.2, 4.2.3]{watanabe}\label{prop:rem}
\begin{enumerate}

\item[(1)] If $i\in [1,l]$ and $a_j\notin \rem(\mbf{a})$, for $j\in[i,l]$ then $\rem(\mbf{a})=\rem(a_1,\dots,a_{i-1})$.

\item[(2)] for each $i\in [1,l]$, $a_i\in \rem(\mbf{a})$ if and only if one of the following holds
\begin{enumerate}
\item[(a)] $a_i$ odd, $i<l$, $a_{i+1}=a_i+1$ and $a_i<2i-|\rem(a_1,\dots,a_{i-1})|$

\item[(b)] $a_i$ even, $i>1$, $a_{i}=a_{i-1}+1$ and $a_i<2i-|\rem(a_1,\dots,a_{i-2})|-1$
\end{enumerate}

\item [(3)]  $a_i \in \rem(\mbf{a})$ if and only if $\s(a_i) \in \rem(\mbf{a})$.
Consequently, $|\rem(\mbf{a})|\in 2\mathbb{Z}$.



\end{enumerate}

\end{prop}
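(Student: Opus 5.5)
We prove all three items together by strong induction on $l$, working only from the recursive formula \eqref{removal}. We read its second line as $\rem(a_1,\dots,a_{l-2})\,(a_{l-1},a_l)$. The first observation is structural. Unwinding the recursion shows that $\rem(\mathbf a)$ is a disjoint union of pairs $(a_{j-1},a_j)$ of adjacent positions. In each pair $a_j$ is even and $a_{j-1}=a_j-1=\s(a_j)$. The pairs cannot overlap: an entry is added to $\rem$ only as the even member of the last pair or as the odd member just before it, and the recursion then jumps to $a_1,\dots,a_{j-2}$. Item (3) follows at once, because $a_i\in\rem(\mathbf a)$ exactly when $a_i$ belongs to one of these pairs, and then $\s(a_i)$ is the other member of the same pair. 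Hence $|\rem(\mathbf a)|$ is even.

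For item (1), we induct on $l-i+1$. If $a_l\notin\rem(\mathbf a)$, the second case of \eqref{removal} cannot apply, since it would put $a_l$ into $\rem(\mathbf a)$. So $\rem(\mathbf a)=\rem(a_1,\dots,a_{l-1})$. Moreover $a_{l-1},\dots,a_i$ are still not removal entries of this shorter column, because its removal set equals $\rem(\mathbf a)$. Iterating gives $\rem(\mathbf a)=\rem(a_1,\dots,a_{i-1})$.

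For item (2), the key auxiliary claim is a \emph{prefix compatibility} statement, proved by induction on $l$. For each $k\in[0,l]$:
\begin{itemize}
\item if $a_k$ is not the odd member of a removed pair $(a_k,a_{k+1})$, then $\rem(\mathbf a)\cap\{a_1,\dots,a_k\}=\rem(a_1,\dots,a_k)$;
\item otherwise, $\rem(\mathbf a)\cap\{a_1,\dots,a_k\}=\rem(a_1,\dots,a_{k-1})\cup\{a_k\}$.
\end{itemize}
Both cases follow by comparing the two branches of \eqref{removal} at length $l$ and applying the induction hypothesis to $a_1,\dots,a_{l-1}$ or $a_1,\dots,a_{l-2}$.

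With the claim in hand, we prove (b). The even entry $a_i$ lies in $\rem(\mathbf a)$ iff $a_i\in\rem(a_1,\dots,a_i)$, by the claim with $k=i$ (an even entry is never the odd member of a pair). This holds iff the second case of \eqref{removal} fires for the column $(a_1,\dots,a_i)$. That is exactly $a_{i-1}=a_i-1$ and $a_i<2i-|\rem(a_1,\dots,a_{i-2})|-1$, which is condition (b).

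For (a), an odd $a_i$ lies in $\rem(\mathbf a)$ iff $i<l$ and its partner $a_{i+1}=a_i+1$ is removed. By (b) applied at position $i+1$, this means
$$a_i+1<2(i+1)-|\rem(a_1,\dots,a_{i-1})|-1,$$
which simplifies to $a_i<2i-|\rem(a_1,\dots,a_{i-1})|$, exactly condition (a).

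The main obstacle is the prefix compatibility claim. The recursion strips entries from the bottom, while (2) asks about membership at an arbitrary position $i$ through the top segment. One must check that later (lower) pairs never change which earlier pairs are removed. The reason they do not is that the removal test at position $j$ depends only on $\rem(a_1,\dots,a_{j-2})$, which lies strictly above. We will make this precise by induction on $l$, splitting according to whether $(a_{l-1},a_l)$ is removed.
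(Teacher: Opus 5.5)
Your proof is correct. The paper contains no argument of its own for this proposition; it is imported from Watanabe's Propositions 4.2.2 and 4.2.3, so there is no in-paper route to compare with. Your strong induction directly on the recursion \eqref{removal} is therefore a valid self-contained replacement. You rightly read the misprint $\rem(a_l,\dots,a_{l-2})$ there as $\rem(a_1,\dots,a_{l-2})$.

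You correctly single out the prefix-compatibility claim as the crux, and it admits a one-line justification that could shorten the write-up. Starting from length $l$, the recursion visits every prefix length $m\ge 2$ except those $m$ for which the pair $(a_m,a_{m+1})$ is removed, and such an $a_m$ is necessarily odd. Hence:
\begin{itemize}
\item every prefix ending in an even entry is visited;
\item the test performed at that prefix is literally condition (b);
\item the recursion resumed from length $m-2$ computes exactly $\rem(a_1,\dots,a_{m-2})$.
\end{itemize}

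If you keep the inductive version instead, only two boundary indices need care: $k=l-2$ in the removal branch and $k=l-1$ in the non-removal branch. There you must check that the status ``$a_k$ is the odd member of a removed pair'' is the same for $\mathbf{a}$ and for the shorter column. It is in both cases, so the induction goes through.

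Your reduction of (a) to (b) at position $i+1$ is also right, via $2(i+1)-|\rem(a_1,\dots,a_{i-1})|-2=2i-|\rem(a_1,\dots,a_{i-1})|$. So is the derivation of (1) and (3) from the disjoint odd--even pair structure.
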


\begin{cor} \label{empty0} For $l \in [0, 2n]$,

\begin{enumerate}
\item[(1)]\cite[Lemma4.3.1]{watanabe} $l$ even $\Rightarrow \rem(1,2,\dots,l)=(1,2,\dots,l)=\redu(1,2,\dots,l)=\emptyset$.

\item[(2)] $l$ odd $\Rightarrow \rem(1,2,\dots,l)=\rem(1,2,\dots,l-1)= (1,2,\dots,l-1)\Rightarrow \redu(1,2,\dots,l)=\{l\}$.

\item[(3)] $\rem(\mbf{a})=\mbf{a}$ if and only if $l$ is even and $\mbf{a}=(1,2,\dots,l)$.

\item[(4)] $\redu(\mathbf{a})=\emptyset$ if and only if $l$ is even and $\mathbf{a}=(1,2,\dots,l)$.

\end{enumerate}
\end{cor}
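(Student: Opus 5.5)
The plan is to prove the four items in order, each by a direct appeal to the recursive definition \eqref{removal} of $\rem$, using induction on $l$ where needed. Items (3) and (4) then follow from (1) together with a short converse argument.

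For (1), we induct on even $l$. The base case $l=0$ is trivial, since $\rem(\emptyset)=\emptyset$. For even $l\ge 2$, take $\mathbf a=(1,\dots,l)$. Then $a_l=l$ is even and $a_{l-1}=l-1=a_l-1$. By induction $\rem(1,\dots,l-2)=(1,\dots,l-2)$, so $|\rem(a_1,\dots,a_{l-2})|=l-2$. The inequality required in the second case of \eqref{removal} then reads $l<2l-(l-2)-1=l+1$, which holds. Hence $\rem(1,\dots,l)=(1,\dots,l-2)(l-1,l)=(1,\dots,l)$, and therefore $\redu(1,\dots,l)=\emptyset$.

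For (2), let $l$ be odd. The last entry $a_l=l$ is odd, so the second case of \eqref{removal} cannot apply and we are in the ``otherwise'' case. Thus $\rem(1,\dots,l)=\rem(1,\dots,l-1)$. Since $l-1$ is even, this equals $(1,\dots,l-1)$ by (1), and so $\redu(1,\dots,l)=\{l\}$.

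For (3), the direction ``if'' is exactly (1). For ``only if'', suppose $\rem(\mathbf a)=\mathbf a$. By Proposition \ref{prop:rem}(3), $|\rem(\mathbf a)|=l$ is even. We induct on $l$; the case $l=0$ is trivial.
\begin{enumerate}
\item Since $a_l\in\rem(\mathbf a)$, the recursion cannot have used the ``otherwise'' case, because that case gives $\rem(\mathbf a)=\rem(a_1,\dots,a_{l-1})$, which does not contain $a_l$. Hence the second case applies: $a_l$ is even, $a_{l-1}=a_l-1$, and $\rem(\mathbf a)=\rem(a_1,\dots,a_{l-2})(a_{l-1},a_l)$.
\item Comparing with $\rem(\mathbf a)=\mathbf a$ gives $\rem(a_1,\dots,a_{l-2})=(a_1,\dots,a_{l-2})$. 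By induction, $(a_1,\dots,a_{l-2})=(1,\dots,l-2)$.
\item The inequality in \eqref{removal} gives $a_l<2l-(l-2)-1=l+1$. Strict increase of the column gives $a_{l-1}\ge l-1$, hence $a_l\ge l$.
\end{enumerate}
Therefore $a_l=l$ and $a_{l-1}=l-1$, which gives $\mathbf a=(1,\dots,l)$.

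For (4), recall that $\redu(\mathbf a)=\mathbf a\setminus\rem(\mathbf a)$ with $\rem(\mathbf a)$ a subword of $\mathbf a$. So $\redu(\mathbf a)=\emptyset$ if and only if $\rem(\mathbf a)=\mathbf a$, and the claim reduces to (3).

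The only delicate step is in (3). There one must check that membership of the last entry in $\rem(\mathbf a)$ forces the second case of the recursion; this lets the induction hypothesis apply to the truncated column $(a_1,\dots,a_{l-2})$ rather than to some other subword. Everything else is routine arithmetic with the inequality in \eqref{removal}.
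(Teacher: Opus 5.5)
Your proof is correct. Note that the paper gives no proof of this corollary. Item (1) is quoted from Watanabe's Lemma 4.3.1, and the other items are stated as consequences of Proposition \ref{prop:rem}. The implicit route is therefore through the membership criterion (2) there. For (2), $a_l=l$ is odd and is the last entry, so neither (a) nor (b) holds, and part (1) of the proposition gives $\rem(1,\dots,l)=\rem(1,\dots,l-1)$. For (3), one runs forward through the column: (a) at $i=1$ forces $a_1<2$, hence $a_1=1$, $a_2=2$, and so on.

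You instead work directly from the recursion \eqref{removal}, reading its second clause as $\rem(a_1,\dots,a_{l-2})(a_{l-1},a_l)$ as intended, and induct downward by peeling off the last pair. Both are inductions of the same size, but yours is self-contained. The only input beyond the definition is that $\rem(a_1,\dots,a_{l-1})$, a subword of entries strictly smaller than $a_l$, cannot contain $a_l$. That is exactly what forces the second clause in the converse of (3).

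Two cosmetic points:
\begin{enumerate}
\item For $l=1$ in (2), it is the first clause ($l\le 1$) of \eqref{removal} that applies, not the ``otherwise'' clause. It still yields $\rem(1)=\emptyset=\rem(\emptyset)$.
\item In (3), you do not need Proposition \ref{prop:rem}(3) for the parity of $l$. The recursion only ever appends pairs, so $|\rem(\mathbf{a})|$ is even directly.
\end{enumerate}
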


\begin{ex}  

$\rem(6,7,8,9,10,11,12)=(11,12)$, $\redu(6,7,8,9,10,11,12)=(6,7,8,9,10)$

$\rem(1,2,3,4,5)=(1,2,3,4)$, $\redu(1,2,3,4,5)=(5)$.
\end{ex}

\begin{prop} \label{prop:redu}\cite{watanabe,nsw} Let $l\in [0,2n]$ and  $\mathbf{a}=(a_1,\dots,a_l) \in SST_{2n}(\varpi_l)$.
Then
\begin{enumerate}

\item[(1)]\cite[Proposition 4.3.6.]{watanabe} $\redu(\mathbf{a})$ is symplectic.

    \item[(2)]\cite[Proposition 4.3.7.]{watanabe} $\redu(\mathbf{a})=\mathbf{a}$ if and only if $\mathbf{a}$ is symplectic.

\item[(3)] \cite[Proposition 4.3.6., Corollary 4.4.3.]{watanabe} The reduction map $\redu$ on $SST_{2n}(\varpi_l)$ is the injective assignment

\begin{align}\redu=\redu_l:SST_{2n}(\varpi_l)&\rightarrow\bigsqcup_{\begin{smallmatrix} 0\le t\le min\{l,2n-l\}\\
l-t\in 2\mathbb{Z}
\end{smallmatrix}}SpT_{2n}(\varpi_t)\nonumber\\
\mathbf{a}&\mapsto \redu( \mathbf{a})=\mathbf{a}\setminus \rem(\mathbf{a}).
\label{redumap}
\end{align}
\end{enumerate}
\end{prop}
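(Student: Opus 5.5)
The plan is to prove the three items in order, each using only the characterisation of removal entries in Proposition \ref{prop:rem} and the structure of non-symplectic columns in Proposition \ref{prop:symplectic1}.

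\textbf{Item (1).} Write $G=\redu(\mathbf a)=(g_1,\dots,g_t)$ and argue by contradiction. If $G$ is not symplectic, Proposition \ref{prop:symplectic1}(3) gives a unique $i$ with $g_{i-1}=2i-3$ and $g_i=2i-2$, so $G$ contains the interval $[x<\s(x)]$ with $x=2i-3$.
\begin{itemize}
\item By Proposition \ref{prop:rem}(3), removal entries come in pairs $\{y,\s(y)\}$, and by (2)(a),(b) each pair sits in adjacent positions of $\mathbf a$. Hence no removed pair straddles the surviving pair. Since $\mathbf a$ is strictly increasing, $g_{i-1},g_i$ occupy adjacent positions $j-1,j$ of $\mathbf a$.
\item Let $r$ be the number of removed entries above position $j-1$. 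Then $j=i+r$, and by Proposition \ref{prop:rem}(1) applied to prefixes, $|\rem(a_1,\dots,a_{j-2})|=r$.
\item Condition (2)(b) for $a_j$ reads $2i-2<2(i+r)-r-1=2i+r-1$, which always holds. So $a_j\in\rem(\mathbf a)$, a contradiction.
\end{itemize}
The step needing care is that Proposition \ref{prop:rem}(2) is stated with prefixes of $\mathbf a$. I would check that the recursion \eqref{removal}, read from the bottom, agrees with the top-down local test; this is exactly what items (1) and (2) of Proposition \ref{prop:rem} assert.

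\textbf{Item (2).} One direction follows from (1): if $\redu(\mathbf a)=\mathbf a$, then $\mathbf a$ is symplectic. Conversely, suppose $\mathbf a$ is symplectic and some pair $(a_i,a_{i+1})$ is removed. Condition (a) gives $a_i<2i-|\rem(a_1,\dots,a_{i-1})|\le 2i$. Since $a_i$ is odd this means $a_i\le 2i-1$, and symplecticity ($a_i\ge 2i-1$) forces $a_i=2i-1$ with $|\rem(a_1,\dots,a_{i-1})|=0$. Then $a_{i+1}=2i$, and symplecticity requires $a_{i+1}\ge 2(i+1)-1=2i+1$, a contradiction. Hence $\rem(\mathbf a)=\emptyset$.

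\textbf{Item (3).} Set $t=l-|\rem(\mathbf a)|$.
\begin{itemize}
\item $l-t\in2\Z$ by Proposition \ref{prop:rem}(3), and $t\le l$ trivially.
\item $t\le n$ follows from (1) and Proposition \ref{prop:symplectic1}(2).
\item For $t\le 2n-l$, I would use the duality $\mathbf a\mapsto\mathbf a^\vee$, taking the complement of $\mathbf a$ in $[2n]$ suitably twisted by $\s$ as in Section \ref{sec:final}. The claim to prove is that $\redu(\mathbf a^\vee)$ has the same length $t$. Since $\mathbf a^\vee$ has length $2n-l$, this gives $t\le 2n-l$.
\item Injectivity is proved by induction on $l$ via the factorisation \eqref{decomposereduthm}. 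The maps $\bigvee$ and $(K,\mathrm{id})$ are injective, the $R$-matrices are bijections, and $\redu_{l-1}$ is injective by induction, so it remains to show $\pi\circ\bigwedge\circ(K^{-1},\mathrm{id})$ is injective on the relevant image.
\end{itemize}

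I expect the main obstacle to be this last point: verifying that the composite equals $\mathbf a\setminus\rem(\mathbf a)$ and that $\pi$ loses no information. If that route proves too heavy, the fallback is to reconstruct $\mathbf a$ directly from $G$ and $l$. This amounts to inserting the $(l-t)/2$ pairs $[x<\s(x)]$ greedily at the lowest admissible positions, and showing this insertion is the only choice compatible with the removal conditions of Proposition \ref{prop:rem}(2).
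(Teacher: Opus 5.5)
Your arguments for (1) and (2) are correct. In (1), the identity $|\rem(a_1,\dots,a_{j-2})|=r$ is not literally Proposition~\ref{prop:rem}(1). It does follow from \eqref{removal}: since $a_{j-1}$ survives, the bottom-up recursion passes through the prefix of length $j-2$.

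The genuine gap is the bound $t\le 2n-l$ in (3). You reduce it to a duality $\mathbf a\mapsto\mathbf a^\vee$ that should preserve the length of the reduction, but you never define it or prove the claim. The $\s$-twist you point to does not work: applying $\s$ and complementing sends $(1,2)\in SST_4(\varpi_2)$ to $(3,4)$, which is symplectic of length $2$, while $\redu(1,2)=()$. Complementing and then reversing by $x\mapsto 2n+1-x$ does preserve $t$, but that needs its own proof.

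A direct argument from \eqref{removal} suffices.
\begin{enumerate}
\item Set $q_k:=k-\tfrac12|\rem(a_1,\dots,a_k)|$, so $q_0=0$. Then $q_k=q_{k-2}+1$ if $(a_{k-1},a_k)$ is removed in the prefix, and $q_k=q_{k-1}+1$ otherwise.
\item Prove $q_k\le\lceil a_k/2\rceil$ by induction. In the removal case use $a_{k-2}\le a_k-2$.
\item In the non-removal case, $\lceil a_k/2\rceil\ge\lceil a_{k-1}/2\rceil+1$ unless $a_{k-1}$ is odd and $a_k=a_{k-1}+1$. In that subcase $\rem(a_1,\dots,a_{k-1})=\rem(a_1,\dots,a_{k-2})$, so $q_k=k-\tfrac12|\rem(a_1,\dots,a_{k-2})|$. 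The failed removal inequality $a_k\ge 2k-|\rem(a_1,\dots,a_{k-2})|-1=2q_k-1$ then forces $a_k\ge 2q_k$ by parity.
\item For $k=l$ this reads $l-\tfrac12|\rem(\mathbf a)|\le n$, which is exactly $t\le 2n-l$.
\end{enumerate}

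For injectivity you have misplaced the difficulty. The tail $\pi\circ\bigwedge\circ(K^{-1},\mathrm{id})$ is trivially injective, because $\bigwedge$ is concatenation with nonempty outputs and $\pi$ is injective on nonempty columns. The real content is the identity of the composite with $\redu$, namely Watanabe's Proposition~4.4.1, quoted just before Theorem~\ref{thm:composition}. The paper gives no proof of the present proposition; it quotes Watanabe, who derives injectivity exactly this way. Citing Proposition~4.4.1 therefore closes your induction. Citing Theorem~\ref{thm:composition} instead would be circular, since it already asserts injectivity.

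Your greedy fallback would also need a sharper notion of admissibility. For $G=(3,4)$ and $l=4$, the lowest free block gives $(1,2,3,4)$, which reduces to $()$, while the true preimage is $(3,4,5,6)$.
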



















\medskip

\section{The inverse  reduction map of a symplectic column by decreasing the rank by one}\label{sec:final}

In \cite[Sections 4.3,~4.4]{watanabe} the reduction map is given as composition of several maps, as below in Theorem \ref{thm:composition}, and among them combinatorial  $R$-matrices \cite{rmatrix} and a reduction map on a lower length  column. In this section we study the inverse reduction map as a composition of  maps and combinatorial R-matrices and detect in Theorem \ref{th:main} the  symplectic column of rank reduced by one on which the inverse reduction map of lower rank acts. Theorem \ref{th:main} is preceded by a lengthy proof in Proposition \ref{prop:surjective}.

\subsection{ Combinatorial R-matrices for single columns: Nakayashiki-Yamada pairing}\label{sec:Rcombinatorial}

We recall the Nakayashiki-Yamada rule or pairing to describe the combinatorial $R$-matrix for single-columns in \cite[Rule 3.10]{rmatrix} as recursively presented in \cite[Section 4.1 ]{watanabe}.

\begin{defi}Let $k, l \in [0,m]$. The combinatorial $ R$-matrix for single columns
is the map
$$R = R_{k,l} : SST_m(\varpi_k) \times SST_m(\varpi_l) \rightarrow SST_m(\varpi_l) \times SST_m(\varpi_k)$$

defined as follows. Let ${\bf a }= (a_1, \dots a_k)\in SST_m(\varpi_k)$ and ${\bf b} = (b_1, \dots, b_l) \in SST_m(\varpi_l)$
\begin{enumerate}
\item  When $k \le  l$.

 For each $r \in[1, k]$, define $i_r \in [1, l]$ inductively as follows.
 Set $i_1$ to
be the minimum $i\in [1, l] $ such that $ a_1\le b_i$; when such $i$ does not exist, we set
$i_1 := 1$. Suppose that $r \ge  2$ and we have determined $i_1, \dots , i_{r-1}$. Set $i_r$ to be the
minimum $i \in[1, l] \setminus \{i_1,
\dots , i_{r-1}\}$ such that $ a_r\le b_i$; when such $i$ does not exist,
we set $i_r := min([1,l] \setminus\{i_1, \dots, i_{r-1}\})$. Then, we set

$$R({\bf a},{\bf  b}):= ({\bf a} \sqcup {\bf b''}, {\bf b'}),$$
where
$${\bf b'} :=(b_{i_1},\dots,b_{i_k}), ~~~~ {\bf b''} :={\bf b} \setminus {\bf b'}.$$

\item  When $k \ge l. $

For each $r \in  [1, l]$, define $i_r \in [1, k]$ inductively as follows. Set $i_1$ to
be the maximum $i \in [1, k]$  such that $a_i \le  b_1$; when such $i$ does not exist, we set
$i_1 := k$. Suppose that $r \ge 2$ and we have determined $\{i_1, \dots , i_{r-1}
\}$. Set $i_r$ to be the
maximum $i\in[1, k] \setminus \{i_1,\dots,i_{r-1}\}$ such that $a_i \le b_r$; when such $i$ does not exist,
we set $i_r := max([1, k] \setminus \{i_1,\dots,i_{r-1}\})
$. Then, we set
$$R({\bf a},{\bf  b}) :=({\bf a'}, {\bf b} \sqcup {\bf a''}),$$
where

$${\bf a'}:= (a_{i_1} , \dots, a_{i_l}), ~~~~{\bf a''}:={\bf a} \setminus {\bf a'}.$$

\end{enumerate}

\end{defi}

The map $R_{kl}$ is a bijection and its inverse is $R_{lk}$ \cite[Proposition]{rmatrix}.

Given an integer $a \in [1,m]$, set
 \begin{align}a^\vee:=[1,m]\setminus\{a\}\in SST_{m}(\varpi_{m-1}).
 \end{align}

 Then $$SST_{m}(\varpi_{m-1})=\{a^\vee:a\in [m]\}$$

 When $m=2n$, $$SST_{2n}(\varpi_{2n-1})=\{a^\vee:a\in [2n]\}=\{\s(a)^\vee:a\in [2n]\}.$$

\begin{ex}Let $n=4$.
\begin{enumerate}
\item Let ${\bf a}=(5^\vee)=(1,2,3,4,6,7,8)\in SST_8(\varpi_7)$, ${\bf b}=(3,4,5)\in SST_8(\varpi_3)$
$$\begin{array}{cccccc}
\begin{matrix}
{\bf a}& {\bf b}\\
1&{}\\
{\oo \bf 2}& {}\\
{\bf 3}&{ \bf 3}\\
{ \bf 4}&{ \bf 4}\\
{}&{\oo\bf 5}\\
{6}&{}\\
{7}&{}\\
8&{}
\end{matrix}&&\overset{R_{7,3}}\longrightarrow&&\begin{matrix}
{\bf a'}& {\bf b}\sqcup {\bf a''}\\
{}&{1}\\
{}&\bf 3\\
{\oo \bf 2}& \bf 4\\
{\bf 3}&{\oo \bf 5}\\
{ \bf 4}&{}\\
{}&{6}\\
{}&{7}\\
{}&{8}\\
\end{matrix}\in SST_{2n}(\varpi_3)\times SST_{2n}(\varpi_7)
\end{array}
$$

\item Let ${\bf a}=(2)\in SST_8(\varpi_1)$, ${\bf b}=(2^\vee)=(1,3,4,5,6,7,8)\in SST_8(\varpi_7)$
$$\begin{array}{cccccc}
\begin{matrix}
{\bf a}& {\bf b}\\
{}&{1}\\
{ \bf 2}& {\bf 3}\\
{}&{ 4}\\
{ }&{  5}\\
{}&{6}\\
{}&{7}\\
{}&{8}\\
{}&{}
\end{matrix}&&\overset{R_{1,7}}\longrightarrow&&\begin{matrix}
{\bf a}\sqcup {\bf b''}& {\bf b'}\\
{1}&{}\\
{\bf 2}&{\bf 3}\\
{ 4}& {}\\
{5}&{ }\\
{6}&{}\\
{7}&{}\\
{8}&{}\\
{}&{}\\
\end{matrix}~~~~~~~~~~~~~~~~~~\in SST_8(\varpi_7)\times SST_8(\varpi_1)
\end{array}
$$
\item $\a=(3)$. $\b=(1,2)$, $R_{1~~2}(\a,\b)=((2,3),(1))$ and $R_{2~~1}((2,3),(1))=(\a,\b)$.
\end{enumerate}

\end{ex}
\subsection{The inverse reduction map as a composition of maps}
We collect some convenient properties of combinatorial  $R$- matrices on single columns.
\begin{prop}\label{prop:collect}
\begin{enumerate}

\item \cite[Lemma 4.1.5.]{watanabe} Let ${\bf a}= (a_1, \dots , a_k) \in  SST_m(\varpi_k)$ and ${\bf b} = (b_1, \dots, b_l) \in SST_m(\varpi_l)$ with
$k \le  l$. Suppose that for each $r\in [1, k]$, there exists $j_r \in [1, l]$ such that $a_r = b_{j_r }$, that is, ${\bf a} \subseteq {\bf b}$. Then, $R({\bf a}, {\bf b}) = ({\b}, {\bf a})$
and
$R(\b, \a) = (\a, \b)$.
\item Let $m\ge 1$.
\begin{enumerate}
\item \cite[Lemma 4.1.3.]{watanabe} $R(m,m^\vee) = (1^\vee, 1)$.
\item \cite[Lemma 4.1.4.]{watanabe} Let $t \in [1,m-1]$. Then,
$R(t^\vee, (1,\dots, t)) = ((1, \dots, t-1,m),m^\vee)$.
\end{enumerate}
\item \label{camel}\cite[Lemma 4.1.6]{watanabe} Let $a = (a_1, \dots , a_k) \in SST_m (\varpi_k)$, and $r, s \in [1,m]$ be such that $a_k + 1 <
r \le s$. Then, we have
\begin{align}R(s^\vee, a \sqcup[r, s]) = (a \sqcup[r-1,s-1], (r - 1)^\vee)\label{camel1}
\end{align}
and
\begin{align}R(a \sqcup [r -1, s-1], (r - 1)^\vee) = (s^\vee, a \sqcup[r, s]).\label{camel2}
\end{align}
\end{enumerate}

\end{prop}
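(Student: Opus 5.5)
The statement collects several identities for the combinatorial $R$-matrix, all attributed to \cite{watanabe}. I would nonetheless check each one directly from the Nakayashiki--Yamada pairing in the Definition above. Whenever a second identity is the reverse of a first, I would get it from the fact that $R_{kl}$ is a bijection with inverse $R_{lk}$. In every case the method is the same: determine the indices $i_1,i_2,\dots$ by induction on $r$, using that the columns are strictly increasing.

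For (1), take $k\le l$ and $\a\subseteq\b$, with $a_r=b_{j_r}$ and $j_1<\dots<j_k$. I claim $i_r=j_r$ for every $r$, and I would prove it by induction on $r$. Assume $i_s=j_s$ for $s<r$. Any $i<j_r$ satisfies $b_i<b_{j_r}=a_r$, so the minimal unused index $i$ with $a_r\le b_i$ is $j_r$, which is unused since the earlier choices are smaller. This gives $\b'=\a$ and $\b''=\b\setminus\a$, hence $\a\sqcup\b''=\b$ and $R(\a,\b)=(\b,\a)$. The identity $R(\b,\a)=(\a,\b)$ then follows because $R_{lk}=R_{kl}^{-1}$. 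It can also be checked directly with the "maximum $i$" rule: the largest unused index $i$ with $b_i\le a_r$ is again $j_r$.

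For (2a), take $\a=(m)$ and $\b=m^\vee=(1,\dots,m-1)$. No entry $b_i$ satisfies $b_i\ge m$, so $i_1=1$. Then $\b'=(1)$ and $\b''=[2,m-1]$, so $\a\sqcup\b''=[2,m]=1^\vee$.

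For (2b), we are in the case $k=m-1\ge l=t$. For $r=1,\dots,t-1$, the maximal unused index $i$ with $a_i\le r$ is the position of $r$ itself, because $t^\vee$ contains $1,\dots,t-1$. For $r=t$, no unused entry of $t^\vee$ is $\le t$, so the default choice takes the maximal remaining index, namely the position of $m$. Thus $\a'=(1,\dots,t-1,m)$ and $\a''=[t+1,m-1]$, and $\b\sqcup\a''=[1,m-1]=m^\vee$.

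For (3), the first column $s^\vee$ has length $m-1$, which is at least the length of $a\sqcup[r,s]$, so the "maximum" rule applies. Every entry of $a$ and of $[r,s-1]$ lies in $s^\vee$. Processing the second column from the top, I claim each such entry pairs with its own copy in $s^\vee$. The inductive reason is that the previously paired entries are all smaller, so the largest unused entry $\le x$ is $x$ itself.

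The last entry $s$ is the one step that needs care, and it is the main point of the argument. The value $s$ is missing from $s^\vee$, and $[r,s-1]$ is already used. So the largest unused entry $\le s$ is $r-1$, which is free because $a_k<r-1$; the hypothesis $a_k+1<r$ ensures that $r-1$ is not an entry of $a$. Hence $\a'=a\sqcup[r-1,s-1]$, and $\a''$ is $s^\vee$ minus these entries. The union of the second column with $\a''$ is then $[1,m]\setminus\{r-1\}=(r-1)^\vee$, which gives \eqref{camel1}. Finally, \eqref{camel2} follows by applying $R^{-1}=R_{lk}$ to \eqref{camel1}.
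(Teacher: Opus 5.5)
Your proposal is correct. It differs from the paper mainly in that it gives a proof at all: the paper supplies no argument for this proposition. It imports all four items from Watanabe (Lemmas 4.1.3--4.1.6), together with the Nakayashiki--Yamada fact that $R_{lk}=R_{kl}^{-1}$.

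You instead derive each identity directly from the pairing rule exactly as it is recalled in the paper's Definition: top-to-bottom processing, the minimum rule when the first column is the shorter one, and the maximum rule when it is the longer one. You then obtain the reversed identities in (1) and in \eqref{camel2} from the same inverse property the paper quotes. This makes the note self-contained and confirms the identities for this particular convention of the rule, which matters because the Nakayashiki--Yamada rule appears in several equivalent-looking forms in the literature. The citation buys only brevity.

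Two small points should be made explicit:
\begin{enumerate}
\item In (2a) with $m=1$, $m^\vee$ is empty. You are then in the other branch ($k=1>l=0$), not the one you used, and the identity $R((1),())=((),(1))=(1^\vee,1)$ is trivial.
\item In (3), the hypothesis $a_k+1<r$ forces $r\ge 2$, so $r-1$ is a genuine letter. Since $r-1\notin a\sqcup[r,s]\subseteq[1,m]$, the second column has length at most $m-1$. This is what justifies applying the maximum rule, which you asserted without comment.
\end{enumerate}
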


\medskip
We now recall Proposition 4.4.1 in \cite{watanabe}. Also recall that $SpT_{2n}(\varpi_1)=SST_{2n}(\varpi_1)$ and $SpT_{2n}(\varpi_2)=SST_{2n}(\varpi_2)\setminus\{(1,2)\}$.

\begin{prop}\cite[Proposition 4.4.1]{watanabe} Let $l\in[2,2n]$ and ${\bf a}=(a_1,\dots,a_l)\in SST_{2n}(\varpi_l)$.
Let us write the following sequence of transformations,
\begin{align}\label{mapsteps}&{\bf a}=(a_1,\dots,a_l)\longrightarrow ((a_l), (a_1,\dots, a_{l-1}))\nonumber\\
&\longrightarrow (\s(a_l)^\vee, (a_1,\dots,a_{l-1}))\in SST_{2n}(\varpi_{2n-1})\times SST_{2n}(\varpi_{l-1})\nonumber\\
&\overset{R_{2n-1,l-1}}\longrightarrow ({\bf c},{\bf d}), \mbox{ for some ${\bf c}\in SST_{2n}(\varpi_{l-1})$ and  ~~${\bf d}\in  SST_{2n}(\varpi_{2n-1})$}\nonumber\\
&\overset{\redu_{l-1}\otimes id}\longrightarrow (\redu_{l-1}({\bf c}),{\bf d})\in SpT_{2n}(\varpi_{t-1})\times SST_{2n}(\varpi_{2n-1}) \quad \mbox{ for some $t\ge 1$ }\nonumber\\
&\overset{R_{t-1,2n-1}}\longrightarrow (\s(b_t)^\vee, {\bf b'}=(b_1,\dots, b_{t-1}))\in SST_{2n}(\varpi_{2n-1})\times SST_{2n}(\varpi_{t-1}),\nonumber\\
 &\mbox{ for some ${ b_t}>b_{t-1} $ and $\b'\in SST_{2n}(\varpi_{t-1})$}\nonumber\\
&\longrightarrow ((b_t),\b'=(b_1,\dots,b_{t-1})\nonumber\\
&\longrightarrow {\bf b}=(b_1,\dots,b_t).
\end{align}
Then \begin{align}{\bf b}=\begin{cases}
(1,2),& \mbox{ if  $a_l=l\in 2\Z$},\label{12}\\
\redu({\bf a}), &\mbox{ otherwise}.
\end{cases}\\
\nonumber
\end{align}
\end{prop}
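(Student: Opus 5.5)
The plan is to prove the statement by a case analysis on the last entry $a_l$, writing $\a'=(a_1,\dots,a_{l-1})$. In each case I would evaluate the two $R$-matrices explicitly using Proposition \ref{prop:collect}, and compare the outcome with $\redu(\a)$ using the recursive description of $\rem$ in Proposition \ref{prop:rem}. The basic dichotomy is whether $\s(a_l)\in\a'$. Since $\a'$ is strictly increasing with entries less than $a_l$, this happens exactly when $a_l$ is even and $a_{l-1}=a_l-1$.

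\emph{Case A: $\s(a_l)\notin\a'$.} Then $\a'\subseteq\s(a_l)^\vee$, so Proposition \ref{prop:collect}(1) gives $R_{2n-1,l-1}(\s(a_l)^\vee,\a')=(\a',\s(a_l)^\vee)$. Hence $\c=\a'$, $\d=\s(a_l)^\vee$ and $\redu_{l-1}(\c)=\redu(\a')$. Because $\redu(\a')\subseteq\a'$ still avoids $\s(a_l)$, Proposition \ref{prop:collect}(1) applies again and gives $R_{t-1,2n-1}(\redu(\a'),\s(a_l)^\vee)=(\s(a_l)^\vee,\redu(\a'))$. Thus $b_t=a_l$ and $\b=\redu(\a')\sqcup(a_l)$. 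It remains to check that this equals $\redu(\a)$. By Proposition \ref{prop:rem}(2), $a_l\notin\rem(\a)$: if $a_l$ is odd, condition (a) fails because $i=l$; if $a_l$ is even, condition (b) fails because $a_{l-1}\neq a_l-1$. Then Proposition \ref{prop:rem}(1) gives $\rem(\a)=\rem(\a')$, as needed. This case also shows $b_t>b_{t-1}$.

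\emph{Case B: $a_l$ even and $a_{l-1}=a_l-1$.} Write $\a'=\a^\circ\sqcup[r,s]$ with $s=a_l-1$ and $[r,s]$ the maximal terminal interval, so that the last entry of $\a^\circ$ is less than $r-1$.

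If $r=1$, then $\a=(1,\dots,l)$ with $l$ even. Here Proposition \ref{prop:collect}(2) (in its two forms (a),(b), with $m=2n$) together with Corollary \ref{empty0} lets me compute the chain directly. I expect it to yield $\b=(1,2)$, which is the exceptional value in \eqref{12}, even though $\redu(\a)=\emptyset$.

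If $r\ge2$, the camel identity \eqref{camel1} gives $\c=\a^\circ\sqcup[r-1,s-1]$ and $\d=(r-1)^\vee$. The column $\c$ is the column $\a$ with the pair $(a_{l-1},a_l)$ deleted and the terminal interval shifted down by one. I then split according to whether $a_l<2l-|\rem(a_1,\dots,a_{l-2})|-1$, that is, whether the pair $(a_{l-1},a_l)$ is removable.

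\begin{itemize}
\item If the pair is removable, then $\redu(\a)=\redu(a_1,\dots,a_{l-2})$. I want to show that $\redu(\c)$ contains $r-1$, and that $\redu(\c)$ is of the form $\a^{\circ\circ}\sqcup[r-1,s'-1]$, where $\a^{\circ\circ}\sqcup[r,s']$ is a column whose reduction is $\redu(a_1,\dots,a_{l-2})$ with its top entry of the interval removed. Then \eqref{camel2} inverts the shift and produces $\s(b_t)^\vee$ and $\b'$ with $\b=\redu(\a)$.
\item If the pair is not removable, I expect $\rem(\c)$ to avoid the shifted interval entirely. Then $R_{t-1,2n-1}$ again falls under \eqref{camel2} with $s$ in place of $s'$, and returns $b_t=a_l$ and $\b=\redu(\a')\sqcup(a_l)$.
\end{itemize}

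The main obstacle is the comparison between $\rem(\c)$ and $\rem(\a)$ in Case B. Shifting the interval $[r,s]$ to $[r-1,s-1]$ flips the parity pattern of the pairs $(x,\s(x))$ inside the interval, and it also changes the counts $|\rem(a_1,\dots,a_{i})|$ that appear in the thresholds of Proposition \ref{prop:rem}(2). To handle this I would first show that the entries of $\a^\circ$ contribute identically to $\rem(\a)$ and to $\rem(\c)$, by Proposition \ref{prop:rem}(1) applied to the common prefix. I would then do an induction along the interval, tracking the threshold $2i-|\rem(\cdot)|$ one step at a time, depending on the parity of $r$ and on whether the count $|\rem|$ at the prefix makes the first pair in the interval removable. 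This bookkeeping is exactly what is needed to verify the hypotheses of \eqref{camel2} for the second $R$-matrix.
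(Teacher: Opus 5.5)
The paper does not prove this proposition; it quotes it from Watanabe. I therefore judge your plan on its own terms.

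Case A is fine. So is the subcase $r=1$: Proposition \ref{prop:collect}(2b) gives $\c=(1,\dots,l-2,2n)$, then $\redu(\c)=(2n)$, and (2a) yields $\b=(1,2)$. Your non-removable subcase is also fine.

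The gap is in the removable subcase of Case B, where two claims your plan rests on are false.

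\emph{First claim.} The entries of $\a^\circ$ need not contribute identically to $\rem(\a)$ and $\rem(\c)$. By Proposition \ref{prop:rem}(2), the status of the last entry $a_p$ of $\a^\circ$ depends on the entry after it. That entry is $r$ in $\a$ but $r-1$ in $\c$. So when $r$ is odd and $a_p=r-2$, the shift creates a new pair $(r-2,r-1)$ straddling the boundary. For $\a=(1,3,4)$ one has $\a^\circ=(1)$, $r=3$ and $\c=(1,2)$, and $1\in\rem(\c)$ although $1\notin\rem(\a)=(3,4)$. Proposition \ref{prop:rem}(1) cannot rule this out, since it only applies when the tail entries are not removed.

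\emph{Second claim.} $\redu(\c)$ need not contain $r-1$. In the example above $\redu(\c)=()$, and the same happens for $\a=(2,3,4)$, where $r=2$ and $\c=(1,2)$. In such cases \eqref{camel2} is not the relevant rule. Instead $\redu(\c)\subseteq(r-1)^\vee$, so Proposition \ref{prop:collect}(1) gives $R_{t-1,2n-1}(\redu(\c),(r-1)^\vee)=((r-1)^\vee,\redu(\c))$, hence $b_t=\s(r-1)$. This equals $r$ if $r$ is even, but $r-2=a_p$ if $r$ is odd. In the odd case the last entry of $\b$ comes from $\a^\circ$, not from the interval. The conclusion still holds in these examples ($\b=(1)=\redu(1,3,4)$ and $\b=(2)=\redu(2,3,4)$), but your argument does not produce it.

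\emph{Repair.} Put $p=|\a^\circ|$ and $q=|\rem(\a^\circ)|$. Show from Proposition \ref{prop:rem}(2) that, in both $\a$ and $\c$, the removed pairs inside the terminal interval form a final segment. This segment is governed by a single threshold: $D=r-2p-2+q$ when $r$ is odd, and $E=r-2p-3+q$ when $r$ is even. The boundary pair $(r-2,r-1)$ of $\c$ is removed exactly when $r$ is odd, $a_p=r-2$ and $D<0$.

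Then split according to whether $r-1$ survives in $\redu(\c)$:
\begin{itemize}
\item if it survives, apply \eqref{camel2}, whose hypothesis only needs $\max\redu(\a^\circ)\le r-2$;
\item if it does not, apply Proposition \ref{prop:collect}(1).
\end{itemize}

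You also need one extra input. In the configuration $r$ odd, $a_p\le r-3$, $D<0$, the chain would output $\redu(\a^\circ)\sqcup[r,r+1]$ while $\redu(\a)=\redu(\a^\circ)$, so this configuration must be excluded. The inequality $a_i\ge 2i-|\rem(a_1,\dots,a_i)|-1$, proved by induction on $i$, gives $D\ge 1$ there and so excludes it.
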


Next theorem is a consequence of this proposition for $l\in [2,2n]$ with the following maps in \cite[Sections 4.3,~4.4]{watanabe} to complete the  translation of the steps in \eqref{mapsteps}:
\begin{align}{\bigvee}&:SST_{2n}(\varpi_l)\longrightarrow  \{((a), (a_1,\dots, a_{l-1}))\in SST_{2n}(\varpi_1)\times SST_{2n}(\varpi_{l-1}): a>a_{l-1}\}\nonumber\\
  &(a_1,\dots,a_{l-1},a)\mapsto((a),(a_1,\dots, a_{l-1})),\label{restrictvee1}
  \end{align}

  \begin{align}K&:SST_{2n}(\varpi_{1})\longrightarrow SST_{2n}(\varpi_{2n-1})\nonumber\\
  &~~~~~~~~~~~~~\quad\quad\quad\quad  (a)\mapsto\s(a)^\vee, \label{k1}
  \end{align}

  \begin{align}K^{-1}&:SST_{2n}(\varpi_{2n-1})\longrightarrow SST_{2n}(\varpi_1)\nonumber\\
  &~~~~~~~~~~~~~\quad\quad\qquad \quad \s(a)^\vee\mapsto(a), \mbox{ $a\in [2n]$},
  \end{align}
and, for $t\ge 1$,
\begin{align}\bigwedge&:\{((a),a_1,\dots,a_{t-1})\in SST_{2n}(\varpi_1)\times SST_{2n}(\varpi_{t-1}): a>a_{t-1}\}\longrightarrow SST_{2n}(\varpi_{t})\nonumber\\
  &~~~~~~~~~~~~~\qquad\qquad\qquad\qquad\qquad \quad\qquad\qquad\qquad\qquad((a),(a_1,\dots, a_{t-1}))\mapsto(a_1,\dots, a_{t-1},a).\label{restrictwedge1}
  \end{align}
  Due to the first point of \eqref{12}, we still need another map $\pi$ such that $\pi(1,2)=()$, and   $\pi=id$ when restricted to $ \bigsqcup_{k=0}^{2n}SST_{2n}(\varpi_k)\setminus \{(1,2)\}$:

  \begin{align}\pi&:\bigsqcup_{k=0}^{2n}SST_{2n}(\varpi_k)\longrightarrow \bigsqcup_{k=0}^{2n}SST_{2n}(\varpi_k)\nonumber\\
  &~~~~~~~~~~~~~\qquad\qquad\qquad\quad{\bf a}\mapsto\begin{cases}{\bf a}, &  \mbox{ if ${\bf a}\neq (1,2)$},\\
  (),&  \mbox{ if ${\bf a}=(1,2)$}.
  \end{cases}
  \end{align}

We now recall the theorem that asserts the reduction map as a composition of several maps \cite[Corollary 4.4.3]{watanabe}. This theorem  can be seen as a corollary of \cite[Proposition 4.4.1]{watanabe} above.
\begin{thm}\label{thm:composition}\cite[Corollary 4.4.3]{watanabe} For a fixed $n\in\mathbb{N}$, let $l\in[0,2n]$. Then, the map

\begin{align}\redu=\redu_l:SST_{2n}(\varpi_l)&\rightarrow\bigsqcup_{\begin{smallmatrix} 0\le t\le min\{l,2n-l\}\\
l-t\in 2\mathbb{Z}
\end{smallmatrix}}SpT_{2n}(\varpi_t), \quad {\bf a}\mapsto \redu({\bf a})\nonumber\\
\end{align}
is injective and is inductively defined as follows
\begin{enumerate}
\item for $l=0,1$, either $t=0$ or respectively $t=1$, in which cases $\redu=id$,
    \item for $l= 2$, if ${\bf a}=(1,2)$, $\redu(\bf a)=()$ and $t=0$, otherwise $\redu=id$ and $t=2$,
    \item for $l\ge 3$,
\begin{align}\label{decomposeredu}\redu=\pi\circ\bigwedge\circ(K^{-1},id)\circ R_{t-1,2n-1}\circ(\redu_{l-1},id)\circ R_{2n-1,l-1}\circ (K,id)\circ \bigvee,
\end{align}
\end{enumerate}
where $t-1$ denotes the length of the obtained column after the action of the reduced map $\redu_{l-1}$. The output column has length $t=t-1+1$ except when $\pi\neq id$ in which case ${\bf a}=(1,\dots,l)$, $l\in 2\Z$ and $\redu({\bf a})=()$.
\end{thm}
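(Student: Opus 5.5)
The plan is to argue by induction on $l$, deriving both the inductive description and the injectivity from Proposition 4.4.1 of \cite{watanabe} (the proposition immediately preceding the theorem). Throughout, the properties of $\redu$ recalled in Section \ref{sec:redu} are taken for granted.

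\emph{Base cases $l\le 2$.} For $l=0,1$ the recursion \eqref{removal} gives $\rem(\mathbf a)=\emptyset$, so $\redu=id$, which is trivially injective. For $l=2$, Proposition \ref{prop:rem}(2) shows that the pair $(a_1,a_2)$ is removable exactly when $a_1$ is odd, $a_2=a_1+1$ and $a_1<2$, that is, exactly when $\mathbf a=(1,2)$. So $\redu(1,2)=()$ and $\redu=id$ on every other column of length $2$. Injectivity holds because $()$ is not of length $2$. In each case the target $SpT_{2n}(\varpi_t)$ is correct by Proposition \ref{symplectic1}(5).

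\emph{Inductive step, the decomposition \eqref{decomposeredu}.} Let $l\ge 3$ and assume the theorem for $l-1$. The chain of maps in \eqref{mapsteps} is, step by step, exactly $\bigvee$, then $(K,id)$, then $R_{2n-1,l-1}$, then $(\redu_{l-1},id)$, then $R_{t-1,2n-1}$, then $(K^{-1},id)$, then $\bigwedge$. The two points to check are these.
\begin{itemize}
\item The intermediate objects lie in the domains of the maps that follow. For $\bigwedge$ this needs $b_t>b_{t-1}$, which is asserted in \eqref{mapsteps}. For $K^{-1}$ it needs a column in $SST_{2n}(\varpi_{2n-1})$, which is automatic since every such column has the form $\s(b)^\vee$.
\item The composite output $\mathbf b$ equals $\redu(\mathbf a)$ except when $a_l=l$ is even, where it equals $(1,2)$.
\end{itemize}

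In the exceptional case, strict increase of $\mathbf a$ with $a_1\ge 1$ and $a_l=l$ forces $\mathbf a=(1,\dots,l)$. Then Corollary \ref{empty0}(1) gives $\redu(\mathbf a)=()$, and indeed $\pi(1,2)=()$. In the non-exceptional case I need $\pi(\mathbf b)=\mathbf b$, i.e. $\mathbf b\neq(1,2)$. This holds because $\mathbf b=\redu(\mathbf a)$ is symplectic by Proposition \ref{prop:redu}(1), while $(1,2)$ is not. Hence $\redu=\pi\circ\bigwedge\circ\cdots\circ\bigvee$. The constraints on $t$ (namely $0\le t\le\min\{l,2n-l\}$ and $l-t$ even) come from Proposition \ref{prop:redu}(3).

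\emph{Inductive step, injectivity.} Each of $\bigvee$, $K$, $K^{-1}$ and $\bigwedge$ is injective on its stated domain. The maps $R_{2n-1,l-1}$ and $R_{t-1,2n-1}$ are bijections, with inverses $R_{l-1,2n-1}$ and $R_{2n-1,t-1}$. The map $\redu_{l-1}$ is injective by the induction hypothesis. Since $t\ge1$, the composite before $\pi$ always outputs a column of length at least $1$.

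For $\pi$, the only possible collision is between an output $()$ and an output $(1,2)$. But $()$ is produced only through $\pi(1,2)$, that is, only from $\mathbf a=(1,\dots,l)$. And $(1,2)$ is never an output, by the symplecticity argument above. So $\pi$ is injective on the image of the composite, and $\redu_l$ is injective.

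The main obstacle is that all the real content sits in Proposition 4.4.1. This includes showing that $R_{t-1,2n-1}$ applied to $(\redu_{l-1}(\mathbf c),\mathbf d)$ produces a first component of length $2n-1$ whose $K^{-1}$-image $b_t$ exceeds $b_{t-1}$. If that proposition had to be reproved, I would track the removal pairs through Proposition \ref{prop:collect}(3), equations \eqref{camel1}–\eqref{camel2}. Taking it as given, the proof above is a bookkeeping argument.
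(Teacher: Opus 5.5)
Your proposal is correct and takes the same route as the paper. The paper gives no independent proof: it reads the statement off Proposition 4.4.1 of \cite{watanabe} by translating the steps of \eqref{mapsteps} into the maps $\bigvee$, $(K,id)$, $R_{2n-1,l-1}$, $(\redu_{l-1},id)$, $R_{t-1,2n-1}$, $(K^{-1},id)$, $\bigwedge$, adds $\pi$ to absorb the exceptional output $(1,2)$ when $a_l=l\in 2\Z$, and leaves injectivity to the fact that every piece is injective together with induction. Your explicit check that $\pi$ creates no collision (because $(1,2)$ is not symplectic and every output before $\pi$ has length $t\ge 1$) spells out bookkeeping that the paper leaves implicit.
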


\begin{ex}Let $n=4$. We compute $\redu({\bf a})$ using previous theorem
\begin{enumerate}
\item Let $l=4\in [0,2n]$ and ${\bf a}=(3,4,5,6)\in SST_{2n}(\varpi_l)$.

\begin{align}& (3,4,5,6)\overset{\bigvee}\longrightarrow ((6),(345))\in   SST_{2n}(\varpi_1) \times SST_{2n}(\varpi_{l-1})\nonumber\\
&\overset{K\otimes id}\longrightarrow(\s(6)^\vee, (3,4,5))\in SST_{2n}(\varpi_{2n-1}) \times SST_{2n}(\varpi_{l-1})\nonumber
\end{align}
\begin{align}
&\overset{R_{2n-1,l-1}}\longrightarrow((2,3,4),345\sqcup\{1,6,7,8\})\in ST_{2n}(\varpi_{l-1}) \times SST_{2n}(\varpi_{2n-1})\nonumber\\
&\overset{\redu_{l-1}\otimes id}\longrightarrow ((2), (3,4,5)\sqcup\{1,6,7,8\})\in  ST_{2n}(\varpi_{t-1}) \times SST_{2n}(\varpi_{2n-1})\nonumber
\end{align}
\begin{align}
&\overset{R_{t-1,2n-1}}\longrightarrow ((s(4)^\vee),(3))\in  SST_{2n}(\varpi_{2n-1})\times SST_{2n}(\varpi_{t-1})\nonumber\\
&\overset{K^{-1}\otimes id}\longrightarrow ((4),(3))\in SST_{2n}(\varpi_{1})\times SST_{2n}(\varpi_{t-1})\nonumber\\
&\overset{\bigwedge}\longrightarrow (3,4)\in SST_{2n}(\varpi_{1+(t-1)})\nonumber\\
&\overset{\pi=id}\longrightarrow (3,4)=\redu({\bf a})\in SpT_{2n}(\varpi_2),\qquad t=2=1+(t-1),
\end{align}
and $t=2\le min\{l,2n-l\}$ and $l-t\in 2\Z$.

\item Let $l=6\in [0,2n]$ and ${\bf a}=(3,4,5,6,7,8)\in SST_{2n}(\varpi_l)$
\begin{align}& (3,4,5,6,7,8)\overset{\bigvee}\longrightarrow ((8),(34567))\in   SST_{2n}(\varpi_1) \times SST_{2n}(\varpi_{l-1})\nonumber\\
&\overset{K\otimes id}\longrightarrow(\s(8)^\vee, (3,4,5,6,7))\in SST_{2n}(\varpi_{2n-1}) \times SST_{2n}(\varpi_{l-1})\nonumber
\end{align}
\begin{align}
&\overset{R_{2n-1,l-1}}\longrightarrow((2,3,4,5,6),(3,4,5,6,7)\sqcup\{1,8\})\in SST_{2n}(\varpi_{l-1}) \times SST_{2n}(\varpi_{2n-1})\nonumber\\
&\overset{\redu_{l-1}\otimes id}\longrightarrow ((2), (3,4,5,6,7)\sqcup\{1,8\})\in  SpT_{2n}(\varpi_{t-1}) \times SST_{2n}(\varpi_{2n-1})\nonumber
\end{align}
\begin{align}
&\overset{R_{t-1},2n-1}\longrightarrow ((s(4)^\vee),(3))\in  SST_{2n}(\varpi_{2n-1})\times SpT_{2n}(\varpi_{t-1})\nonumber\\
&\overset{K^{-1}\otimes id}\longrightarrow ((4),(3))\in SST_{2n}(\varpi_{1})\times SST_{2n}(\varpi_{t-1})\nonumber
\end{align}
\begin{align}
&\overset{\bigwedge}\longrightarrow (3,4)\in SST_{2n}(\varpi_{1+t-1 })\nonumber\\
&\overset{\pi=id}\longrightarrow (3,4)\in SpT_{2n}(\varpi_t),\qquad t=2=1+(t-1).
\end{align}
and $t=2\le min\{l,2n-l\}$ and $l-t\in 2\Z$.

\end{enumerate}
\end{ex}

As we may check all these steps are reversible. 

This is conceptually very interesting because, in particular, it allows to reduce the computation  to  columns  of lower length \cite[Proposition 4.4.1]{watanabe}. Combinatorial $R$-matrices are bijections and so we could use the inverse of these maps to compute the inverse reduction.
However one still  needs to compute the inverse reduction of shorter symplectic columns  and thereby it is useful to  have at hand the inverse computed in \cite{azreduction}.

We shall  next use the previous inductive definition of the injective map $\redu$ in Theorem \ref{thm:composition} to prove that is also surjective. Indeed we already know from \cite{azreduction} the explicit inverse reduction map. The proof of the surjectivity of the reduction map in the framework of those composition of maps will detect the shorter symplectic column that should conside we consider in the inverse of the reduction map as a composition of those aforesaid maps. The final goal is to avoid the sequence of composition of maps and keep the essential information.

\begin{prop}\label{prop:surjective} The reduction map $\redu$ is  surjective and therefore $\redu^{-1}$ does exist.
\end{prop}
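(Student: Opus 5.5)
Fix $n$. I will prove by induction on $l\in[0,2n]$ that for every $t$ with $0\le t\le\min\{l,2n-l\}$ and $l-t\in2\Z$, and every $G\in SpT_{2n}(\varpi_t)$, there is ${\bf a}\in\SST(\varpi_l)$ with $\redu({\bf a})=G$. The approach is to run the chain of maps in Theorem \ref{thm:composition} backwards. Every step there is invertible on its image: $R_{k,l}^{-1}=R_{l,k}$, $K$ is a bijection onto $\SST(\varpi_{2n-1})$, and $\bigvee,\bigwedge$ are mutually inverse on their restricted domains. Only $\redu_{l-1}$ requires the induction hypothesis.

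\emph{Base cases.} For $l=0,1$ there is nothing to prove. For $l=2$, Proposition \ref{prop:symplectic1}(5) gives $SpT_{2n}(\varpi_2)=\SST(\varpi_2)\setminus\{(1,2)\}$, on which $\redu=id$, and $\redu(1,2)=()$.

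\emph{Inductive step, $l\ge3$.}
\begin{enumerate}
\item If $t=0$, take ${\bf a}=(1,\dots,l)$ with $l$ even, using Corollary \ref{empty0}. This is exactly the case where $\pi\neq id$.
\item If $t\ge1$, write $G=(b_1,\dots,b_t)$. Apply $\bigvee$ to get $((b_t),{\bf b}')$ and then $K$ to get $(\s(b_t)^\vee,{\bf b}')$. Apply $R_{2n-1,t-1}=R_{t-1,2n-1}^{-1}$ to obtain a pair $({\bf c}',{\bf d})\in\SST(\varpi_{t-1})\times\SST(\varpi_{2n-1})$.
\item The key claim is that ${\bf c}'$ is symplectic, i.e.\ ${\bf c}'\in SpT_{2n}(\varpi_{t-1})$. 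Moreover $t-1\le\min\{l-1,2n-l+1\}$ and $(l-1)-(t-1)\in2\Z$, so ${\bf c}'$ lies in the codomain of $\redu_{l-1}$.
\item By induction choose ${\bf c}$ with $\redu_{l-1}({\bf c})={\bf c}'$. Then apply $R_{l-1,2n-1}$ to $({\bf c},{\bf d})$ to obtain $(x^\vee,{\bf e})$.
\item Set $a=\s(x)$. The claim is that $a>e_{l-1}$, so that $\bigwedge$ applies and gives ${\bf a}=(e_1,\dots,e_{l-1},a)$.
\item Since every step is reversible, ${\bf a}$ then runs forward through \eqref{mapsteps} to $G$. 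The exceptional output $(1,2)$ in \eqref{12} cannot occur, because $G\neq(1,2)$ when $t=2$ ($G$ symplectic) and $t\ne2$ otherwise. Hence Proposition 4.4.1 of \cite{watanabe} gives $\redu({\bf a})=G$.
\end{enumerate}

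\emph{Key step: symplecticity of ${\bf c}'$.} I expect to prove this by making $R_{2n-1,t-1}$ explicit on $(\s(b_t)^\vee,{\bf b}')$ via the Nakayashiki--Yamada pairing. Here $k=2n-1\ge l'=t-1$, and the complement of $\s(b_t)^\vee$ is a single letter. Each $b_r$ is paired with the largest unused element of $[2n]\setminus\{\s(b_t)\}$ that is $\le b_r$, in the spirit of Proposition \ref{prop:collect}(3). So ${\bf c}'$ is ${\bf b}'$ with one maximal interval $[r,s]$ of consecutive entries shifted down by one; this is the camel pattern \eqref{camel1}. In the case where $\s(b_t)$ exceeds $b_{t-1}$ by more than one, nothing shifts and ${\bf c}'={\bf b}'$.

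Symplecticity should then follow from $b_k\ge2k-1$ for all $k$ together with $b_t\ge2t-1$. The shifted block consists of entries sitting just below the gap at $\s(b_t)$, and the inequality $b_t\ge 2t-1$ (Proposition \ref{prop:symplectic1}(2)) leaves room for the shift by one. The delicate subcase is a shifted entry landing exactly on $2k-2$. I plan to rule it out using Proposition \ref{prop:symplectic1}(3): a first violation must be an odd--even pair $(2k-3,2k-2)$, and this contradicts the shape of the shifted interval. The length constraint on $t-1$ is immediate from the constraint on $t$.

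\emph{Second obstacle: the condition $a>e_{l-1}$.} I will handle it by computing $R_{l-1,2n-1}({\bf c},{\bf d})$ explicitly. Here ${\bf d}$ has a single missing letter $y$, and the output $x^\vee$ omits a letter determined by where the pairing leaves ${\bf c}$ unmatched. If this check becomes unwieldy, I will fall back on the explicit inverse of \cite[Theorem 7]{azreduction}: it supplies a preimage ${\bf a}$ directly, and the composition above is then only needed to identify ${\bf c}'$. Surjectivity then holds, and together with injectivity from Theorem \ref{thm:composition} this shows that $\redu^{-1}$ exists.
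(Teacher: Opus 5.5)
Your plan follows the paper's route. You invert the chain of Theorem \ref{thm:composition}, show that the column $\mathbf{c}'$ produced by $R_{2n-1,t-1}$ is symplectic, apply $\redu_{l-1}^{-1}$, and check that the last $R$-matrix lands in the domain of $\bigvee^{-1}$.

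Your step 3 is fine, and easier than you expect. By Proposition \ref{prop:collect}(1) nothing shifts unless $\s(b_t)=b_{t-1}$, i.e.\ $b_t\in2\Z$ and $b_{t-1}=b_t-1$; so nothing shifts when $\s(b_t)=b_{t-1}+1$ either. In the shifted case, let $[b_t-r,b_t-1]$ be the maximal run of consecutive entries of $\mathbf{b}'$ ending at $b_{t-1}$. The entry of $\mathbf{c}'$ in position $k\in[t-r,t-1]$ is $b_t-t+k-1$, and this is $\ge 2k-1$ because $b_t\ge 2t-1$. So no delicate subcase arises.

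The genuine gap is step 5, which you only announce. The inequality $a>e_{l-1}$ does not follow formally from steps 1--4. It must use the hypothesis $t\le 2n-l$, which your argument never invokes there. Take $n=2$, $l=4$, $\mathbf{b}=(1,3)$, so $t=2>2n-l$. Steps 1--4 go through, with $\mathbf{c}'=(1)$, $\mathbf{d}=4^\vee$ and $\mathbf{c}=\redu_3^{-1}((1))=(1,3,4)$. But $R_{3,3}((1,3,4),(1,2,3))=(2^\vee,(1,2,3))$, which gives $a=\s(2)=1<3=e_3$. Similarly, suppose $\mathbf{d}=y^\vee$ with $y\notin\mathbf{c}$, but $\mathbf{c}$ has an entry above $\s(y)$. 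Then Proposition \ref{prop:collect}(1) returns $(y^\vee,\mathbf{c})$, and $a=\s(y)$ is not the largest letter. So ``computing $R_{l-1,2n-1}(\mathbf{c},\mathbf{d})$ explicitly'' needs control over the shape of $\mathbf{c}$, and the proposal gives none.

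The missing ingredient, which the paper carries through its induction, is the shape statement \eqref{induc2}. It says that the entries of $\redu^{-1}_{l-1}(\mathbf{c}')$ larger than $c'_{t-1}$ form a single interval, possibly empty, of even length and closed under $\s$. That interval starts at $c'_{t-1}+1$ if $c'_{t-1}$ is even and at $c'_{t-1}+2$ if it is odd. With this, only two configurations can occur.

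Either $y\notin\mathbf{c}$ and $\mathbf{c}\subseteq[1,\s(y)-1]$, and Proposition \ref{prop:collect}(1) gives $a=\s(y)>\max\mathbf{c}$.

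Or the entries of $\mathbf{c}$ that are $\ge y$ form an interval $[y,z]$ with $z$ even. In the shifted case this is the run $[b_t-r-1,b_t-2]$ followed by the terminal interval. Then \eqref{camel2} gives $x=z+1$, hence $a=\s(x)=z+2$, which exceeds every entry of $\mathbf{e}=(\mathbf{c}\cap[1,y-1])\sqcup[y+1,z+1]$.

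This is where $t\le 2n-l$ enters. When $z$ is a removed entry, Proposition \ref{prop:rem}(2)(b) at position $l-1$ gives $z\le l+t-2\le 2n-2$, so \eqref{camel2} applies. You must then check that the new preimage again has the prescribed shape relative to $b_t$, to close the induction. This is the case analysis behind \eqref{main1}--\eqref{main5}.

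Your fallback to \cite[Theorem 7]{azreduction} does not close the gap. That theorem already constructs $\redu^{-1}$, so it is the surjectivity being proved, and the composition argument becomes idle. If you only want surjectivity, there is a much shorter proof: use injectivity from Theorem \ref{thm:composition} with the standard count $|SpT_{2n}(\varpi_t)|=\binom{2n}{t}-\binom{2n}{t-2}$. This telescopes over $t\equiv l\pmod 2$, $0\le t\le\min\{l,2n-l\}$, to $\binom{2n}{l}=|SST_{2n}(\varpi_l)|$.
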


\begin{proof}Since $SpT_{2n}(\varpi_1)=SST_{2n}(\varpi_1)$ and $SpT_{2n}(\varpi_2)=SST_{2n}(\varpi_2)\setminus\{(1,2)\}$, clearly from previous theorem, the map $\redu$ is surjective for $l=0,1,2$. Let $l\ge 3$, and by induction assume that the map $\redu$ is surjective for $l-1$.

Let   $0\le t\le min\{l,2n-l\}$ such that
$l-t\in 2\mathbb{Z}$.
Given ${\bf b}\in SpT_{2n}(\varpi_t)$ let us prove that $\b$ is reached by the map $\redu$ in $SST_{2n}(\varpi_l)$, that is, there exists $\a\in SST_{2n}(\varpi_l)$ such that $\redu(\a)=\b$.

If $t=0$ and $l\in 2\Z$, then ${\bf b}=()$ and by Corollary \ref{empty0},  $\redu(1,2,\dots, l)=()$ with  $l\in 2\Z$, and $\a=(1,2,\dots,l)\in SST_{2n}(\varpi_l)$. 

\bigskip

If $t= 1$ and $2\le l-1\in 2\Z$, let $\b=(b)\in SpT_{2n}(\varpi_1)$ and define
\begin{align}&\a=
\begin{cases}[1,l_1]\sqcup(b)\sqcup [ b+1, b+l_{2}],
  &
 \mbox{if } b\in 2\mathbb{Z}\\
 [1,l_1])\sqcup(b)\sqcup[ b+1+1, b+1+l_{2}],
 &\mbox{if } b\notin 2\mathbb{Z}
\end{cases},\label{t=1}
\end{align}

and
\begin{align}
&l_1=
\begin{cases} min\{b-2, l-1\},& \mbox{if } b\in 2\mathbb{Z}\\
min\{ b-1, l-1\},&\mbox{if } b\notin 2\mathbb{Z},\\
\end{cases}
\qquad \mbox{ and }  l_{2}=l-1-l_1\in 2\Z.
\end{align}

Henceforth, $\a \in SST_{2n}(\varpi_l)$ and $\redu(\a)=\b$.

So far, one has $\redu_l^{-1}()=[1,l]$ with $4\le l\in 2\Z$, and $\redu_l^{-1}((b))=\a\in\SST(\varpi_l)$, with $3\le l\notin 2\Z$, such that
if \begin{align}\label{induczerone}\hat\a:=[1,l-1]=\redu_{l-1}^{-1}()
\end{align}
then
\begin{align}\label{inducxx}\a=\begin{cases}(\hat \a,b),& \mbox{ if $\hat \a\subseteq[1,b-1]$},\\
\hat \a\setminus \{b+1\}\sqcup [l,l+1],& \mbox{ if $b\notin 2\Z$ and  $b\in\hat \a=[1,l-1]$},\\
\hat \a\setminus \{b-1\}\sqcup  [l,l+1],& \mbox{ if $b\in 2\Z$ and  $b\in\hat \a=[1,l-1]$},\\
\end{cases}
\end{align}
and $\a\supseteqq \b$.

Note that \eqref{inducxx} coincides with \eqref{t=1}.

Let $2\le b\in 2\Z$.

 If $\hat \a\subseteq[1,b-1]$ holds in \eqref{inducxx}, $l-1<b-1$ because $l-1\in 2\Z$ and $b-1\notin 2\Z$.
It implies that $l-1\le b-2$ and $l_1=l-1$ and $l_2=0$ in \eqref{t=1}. This means for $b\in 2\Z$, $\a=\hat\a\sqcup (b)$ in \eqref{t=1}.

If $b\in\hat \a=[1,l-1]$ holds in \eqref{inducxx}, then $2\le b\le l-1$ and in \eqref{inducxx} one has
$$ \a=\hat \a\setminus \{b-1\}\sqcup [l,l+1]=[1,b-2]\sqcup [b,l-1]\sqcup[l,l+1]=[1,b-2]\sqcup (b)\sqcup [b+1,l+1].$$.

In \eqref{t=1}, one has $l_1=b-2$,  $l_2=l-1-b+2=l-b+1$, and also
$$ \a=[1,b-2]\sqcup (b)\sqcup[b+1, b+l-b+1]=[1,b-2]\sqcup (b)\sqcup[b+1, l+1].$$

Let $ b\notin 2\Z$.

 If $\hat \a\subseteq[1,b-1]$ holds in \eqref{inducxx}, then  $l-1\le b-1$.
It implies that  $l_1=l-1$ and $l_2=0$ in \eqref{t=1}. This means for $b\notin 2\Z$, $\a=\hat\a\sqcup (b)$ in \eqref{t=1}.

If $b\in\hat \a=[1,l-1]$ holds in \eqref{inducxx}, then $ b< l-1\Leftrightarrow b+1\le l-1$ because $b\notin 2\Z$ and $l-1\in 2\Z$. in \eqref{inducxx} one has
$$ \a=\hat \a\setminus \{b+1\}\sqcup [l,l+1]=[1,b]\sqcup [b+2,l-1]\sqcup[l,l+1]=[1,b-1]\sqcup (b)\sqcup [b+2,l+1].$$.

In \eqref{t=1}, one has $l_1=b-1$,  $l_2=l-1-b+1=l-b$, and also

$$ \a=[1,b-1])\sqcup(b)\sqcup[ b+1+1, b+1+l-b]=[1,b-1]\sqcup (b)\sqcup [b+2,l+1].$$

\bigskip

Let $t\ge 2$. We now apply to $\b=(b_1,\dots,b_t)\in SpT_{2n}(\varpi_t)$ the reverse procedure in \eqref{mapsteps}.

One has
\begin{align}&\b=(b_1,\dots,b_t)\in SpT_{2n}(\varpi_t)\rightarrow ((b_t),(b_1,\dots,b_{t-1}))\in SpT_{2n}(\varpi_1)\times SpT_{2n}(\varpi_{t-1})\nonumber\\
&\rightarrow (\s(b_t)^\vee,\b'=(b_1,\dots,b_{t-1}))\in SST_{2n}(\varpi_{2n-1})\times SpT_{2n}(\varpi_{t-1})\nonumber\\
&\overset{R_{2n-1,t-1}}\longrightarrow(\c', \d) \mbox{ for some $(\c',\d)\in \SpT(\varpi_{t-1})\times \SST(\varpi_{2n-1})$.}\label{needsproof}
\end{align}

We now prove \eqref{needsproof}, that is, we exhibit $(\c', \d)\in  \SpT(\varpi_{t-1})\times \SST(\varpi_{2n-1})$, such that \begin{align}R_{2n-1,t-1}(\s(b_t)^\vee,\b')=(\c', \d).\label{goal}
\end{align}
The proof depends on the parity of the last component $b_t$ of $\b$: either $\s(b_t)=b_t+1$ if $b_t\notin 2\Z$, or
$\s(b_t)=b_t-1$ if $b_t\in 2\Z$.

\medskip

\textbf{Case 1.}  $b_t\notin 2\Z\Leftrightarrow\s(b_t)=b_t+1$.

If
$b_t\notin 2\Z$ then  $ \s(b_t)=b_t+1\in 2\Z$ and $(\s(b_t)^\vee,\b')=((b_t+1)^\vee,\b')$ with $\b'=(b_1,\dots,b_{t-1})$ $\subseteq [1,b_{t-1}]\in SpT_{2n}(\varpi_{t-1})$.
Hence $\b'\subseteq  (b_t+1)^\vee$ and by Proposition \ref{prop:collect}, $(1)$,
\begin{align}{R_{2n-1, t-1}}((b_t+1)^\vee,\b')=(\b', (b_t+1)^\vee)=(\c',\d)\in \SpT(\varpi_{t-1})\times \SST(\varpi_{2n-1})\label{snale1}.
\end{align}

 Note $t-1\le min\{l-1, 2n-(l-1)\}$ and $l-t\in 2\Z\Leftrightarrow l-1-(t-1)\in 2\Z$. Then
by induction on $l$, one has
\begin{align}\label{induc1}
\b'=\redu_{l-1}(\hat \a), \mbox{ for some $\hat\a\in \SST(\varpi_{l-1})$ with $\b'=(b_1,\dots,b_{t-1})\subseteq\hat\a$.}
\end{align}
That is, $\redu_{l-1}$ is injective and surjective, and  $\redu^{-1}_{l-1}$ does exist,
\begin{align}\redu^{-1}_{l-1}(\b')=\redu^{-1}_{l-1}(b_1,\dots,b_{t-1})=\hat\a=(a_1,\dots,a_{l-1})\in \SST(\varpi_{l-1}) \mbox{ with $\b'=(b_1,\dots,b_{t-1})\subseteq\hat\a$}.\label{snale2}
\end{align}

Since $t-1\le l-1$, by induction on $t\ge 1$, we then may write,
  for some column $\a''\sqcup \{b_{t-1}\}:=\hat\a\cap[1,b_{t-1}]$, with $b_0:=0$,
\begin{align}\label{induc2}\hat\a=\begin{cases}\a''\sqcup \{b_{t-1}\}\sqcup [b_{t-1}+1,b_{t-1}+s],~~\mbox{ for some $0\le s\in 2\Z$}& \mbox{ if $b_{t-1}\in 2\Z$}\\
\a''\sqcup \{b_{t-1}\}\sqcup [b_{t-1}+1+1,b_{t-1}+1+s],~~\mbox{ for some $0\le s\in 2\Z$}& \mbox{ if $b_{t-1}\notin 2\Z$},
\end{cases}
\end{align}
such that $s+\ell(\hat\a\cap[1,b_{t-1}])=l-1$.

Note if $s=0$, it means the intervals in \eqref{induc2} are  empty. If the intervals are not empty then $2\le s\in 2\Z$.

Hence, from \eqref{snale1}, \eqref{snale2},
\begin{align}&(\redu^{-1}_{l-1}\otimes id){R_{2n-1, t-1}}((b_t+1)^\vee,\b')=(\redu^{-1}_{l-1}\otimes id)(\b',(b_t+1)^\vee)\nonumber \\
&=(\hat\a, (b_t+1)^\vee),\nonumber \\
& \mbox{ with $\a\in \SST(\varpi_{l-1})$ satisfying \eqref{induc2}}. \label{cow}
\end{align}

From \eqref{induc2} and $ \s(b_t)=b_t+1\in 2\Z$, we have to analyse two cases

\begin{itemize}

\item $ \hat \a=(a_1,\dots, a_{l-1})\subseteq [1,b_t-1].$
\end{itemize}
From \eqref{snale1} and \eqref{cow}
$$R_{l-1,2n-1}(\redu^{-1}_{l-1}\otimes id){R_{2n-1,t-1}}((b_t+1)^\vee,\b')=R_{l-1,2n-1}(\hat\a, (b_t+1)^\vee)=( (b_t+1)^\vee,\hat\a)\Rightarrow$$
$$(K\otimes id)R_{l-1,2n-1}(\redu^{-1}_{l-1}\otimes id){R_{2n-1,t-1}}((b_t+1)^\vee,\b')=(K\otimes id)( (b_t+1)^\vee,\hat\a)$$
$$=(K\otimes id)( \s(b_t)^\vee,\hat\a) =(b_t,\hat\a)\longrightarrow (\hat\a, b_t)\in SST_{2n}(\varpi_l).$$

Hence, for $b_t\notin 2\Z$ and $ \hat \a\subseteq [1,b_t-1],$

\begin{align}\label{main1}\redu^{-1}(\b)=\a=(\hat\a, b_t)\supseteq \b,  \mbox{ where $\hat \a=\redu^{-1}_{l-1}(b_1,\dots,b_{t-1})$}.
\end{align}
\begin{itemize}
\medskip
\item  $b_t\in\hat \a$.
\end{itemize}
Note $b_{t-1}+1\le b_t$ with $b_{t-1}\in 2\Z$ and
$b_{t-1}+1<b_t$ with $b_{t-1}\notin 2\Z$. Thus $b_{t-1}+1+1\le b_t$ with $b_{t-1}\notin 2\Z$.
On the other hand  since  $b_t\in\hat \a$, from \eqref{induc2}, $b_t\in  [b_{t-1}+1,b_{t-1}+s]$, with $2\le s\in 2\Z$, if $b_{t-1}\in 2\Z$; and $ b_t\in [b_{t-1}+1+1,b_{t-1}+1+s]$,~~with $2\le s\in 2\Z$, if $b_{t-1}\notin 2\Z$. Additionally, these intervals start with an odd number and terminate with an even number, and are closed under the parity involution $\s$. Thus since $b_t$ is odd, $b_t, b_t+1$   are together in the same  interval.

Hence $[b_t,b_t+1]\subseteq \hat\a\setminus (\a''\sqcup\{b_{t-1}\})$ which implies $[b_t,b_t+1]\subseteq [b_{t-1}+1,b_{t-1}+s]$. Let $\a'''\sqcup\{b_t\}:=\hat\a\cap[1,b_t]$. We may then rewrite \eqref{induc2} as

\begin{align}\label{induc3}\hat\a=\begin{cases}\a'''\sqcup \{b_t\}\sqcup [b_{t}+1,b_{t-1}+s],~~\mbox{ for some $2\le s\in 2\Z$}& \mbox{ if $b_{t-1}\in 2\Z$}\\
\a'''\sqcup \{b_t\}\sqcup\ [b_{t}+1,b_{t-1}+1+s],~~\mbox{ for some $2\le s\in 2\Z$}& \mbox{ if $b_{t-1}\notin 2\Z$},
\end{cases}
\end{align}
such that $s+\ell(\hat\a\cap[1,b_{t-1}])=l-1$.

\begin{obs} Note $\hat\a\cap[1,b_t]=\hat\a\cap[1,b_{t-1}]\sqcup \hat\a\cap[b_{t-1}+1,b_t]$,  and considering  \eqref{induc2} with $b_{t-1}\in 2\Z$,  $$\hat \a=\hat\a\cap[1,b_{t-1}]\sqcup [b_{t-1}+1,b_{t-1}+s]=\hat\a\cap[1,b_{t-1}]\sqcup [b_{t-1}+1,b_t]\sqcup[b_t+1,b_{t-1}+s]=\hat\a\cap[1,b_t]\sqcup[b_t+1,b_{t-1}+s].$$
For $b_{t-1}\notin 2\Z$ is similar.
\end{obs}

From \eqref{cow},
one has
\begin{align}R_{l-1,2n-1}(\redu^{-1}_{l-1}\otimes id){R_{2n-1,t-1}}((b_t+1)^\vee,\b')=R_{l-1,2n-1}(\hat\a, (b_t+1)^\vee).\label{warm}
\end{align}

 \emph{If $b_{t-1}\in 2\Z$}, from Proposition \ref{prop:collect}, \eqref{camel}, \eqref{camel2}, and \eqref{induc3}

\begin{align}&\eqref{warm}\Leftrightarrow R_{l-1,2n-1}(\hat\a, (b_t+1)^\vee)=R_{l-1,2n-1}(\a'''\sqcup \{b_t\}\sqcup[b_t+1,b_{t-1}+s], (b_t+1)^\vee)\nonumber\\
&=((b_{t-1}+s+1)^\vee, \a'''\sqcup \{b_t\}\sqcup [b_t+2,b_{t-1}+s+1]) \nonumber\\
&=(\s(b_{t-1}+s+2)^\vee,\a'''\sqcup \{b_t\}\sqcup [b_t+2,b_{t-1}+s+1]), \nonumber\\
&\mbox{ by $b_{t-1}+s+1\notin 2\Z\Rightarrow \s(b_{t-1}+s+2)=b_{t-1}+s+1$}, \label{vulture}
\end{align}
and, therefore, it follows
\begin{align*}&(K\otimes id)R_{l-1,2n-1}(\redu^{-1}_{l-1}\otimes id){R_{2n-1,t-1}}((b_t+1)^\vee,\b')\\
&=(K\otimes id)R_{l-1,2n-1}(\hat\a, (b_t+1)^\vee)=(K\otimes id)R_{l-1,2n-1}(\a''\sqcup \{b_t\}\sqcup [b_t+1,b_{t-1}+s], (b_t+1)^\vee), \mbox{ by \eqref{induc3}}\\
&=(K\otimes id)(\s(b_{t-1}+s+2)^\vee,\a'''\sqcup \{b_t\}\sqcup [b_t+2,b_{t-1}+s+1])\\
&=(b_{t-1}+s+2,\a'''\sqcup \{b_t\}\sqcup [b_t+2,b_{t-1}+s+1]),   \mbox{ by \eqref{vulture}}\ \\
&\longrightarrow (\a'''\sqcup \{b_t\}\sqcup [b_t+2,b_{t-1}+s+1],b_{t-1}+s+2)=\a'''\sqcup \{b_t\}\sqcup [b_t+2,b_{t-1}+s+2]\in SST_{2n}(\varpi_l)
\end{align*}

Hence, for $b_{t-1}\in 2\Z~,~~~b_t\notin 2\Z$ and $b_t\in\hat \a$,
\begin{align}&\redu^{-1}(\b)=\a=\a'''\sqcup \{b_t\}\sqcup [b_t+2,b_{t-1}+s+2]\supseteqq \b. \nonumber 
\end{align}
In other words,
\begin{align}
&\redu^{-1}(\b)=\a, \mbox{ where}\nonumber\\
&\a=\hat\a\setminus\{b_t+1\}\sqcup [b_{t-1}+1+s, b_{t-1}+2+s]\in SST_{2n}(\varpi_l),\nonumber\\
& \mbox{ with $2\le s\in 2\Z$ in \eqref{induc2} in the definition of }\nonumber\\
&\mbox{$\hat\a:=\redu^{-1}(b_1,\dots,b_{t-1})$. }\label{rankrelation1}
\end{align}

\begin{obs}For $b_{t-1}\in 2\Z$, $[b_{t}+1,b_{t-1}+s]$ with $s\ge 2$, we  have $b_t+1\le b_{t-1}+s$ for an $s\ge 2$.
Then $b_t+2\le b_{t-1}+s+1<b_{t-1}+s+2$.
\end{obs}
\medskip

\emph{If $b_{t-1}\notin 2\Z$}, the proof is similar. From Proposition \ref{prop:collect}, \eqref{camel},  \eqref{camel2}, and \eqref{induc3}

\begin{align}&\eqref{warm}\Leftrightarrow R_{l-1,2n-1}(\a, (b_t+1)^\vee)=R_{l-1,2n-1}(\a'''\sqcup \{b_t\}\sqcup [b_t+1,b_{t-1}+s+1], (b_t+1)^\vee)\nonumber\\
&=((b_{t-1}+s+2)^\vee, \a'''\sqcup \{b_t\}\sqcup [b_t+2,b_{t-1}+s+2])\nonumber\\
&=(\s(b_{t-1}+s+3)^\vee,\a'''\sqcup \{b_t\}\sqcup [b_t+2,b_{t-1}+s+2]),\nonumber \\
&\mbox{ by $b_{t-1}+s+2\notin 2\Z\Rightarrow \s(b_{t-1}+s+3)=b_{t-1}+s+2$}, \label{tiger}
\end{align}
and
\begin{align*}&(K\otimes id)R_{l-1,2n-1}(\redu^{-1}_{l-1}\otimes id){R_{2n-1,t-1}}((b_t+1)^\vee,\b')\\
&=(K\otimes id)R_{l-1,2n-1}(\hat\a, (b_t+1)^\vee)=(K\otimes id)R_{l-1,2n-1}(\a'''\sqcup \{b_t\}\sqcup [b_t+1,b_{t-1}+s+1], (b_t+1)^\vee)\\
&=(K\otimes id)(\s(b_{t-1}+s+3)^\vee,\a'''\sqcup \{b_t\}\sqcup [b_t+2,b_{t-1}+s+2]) \mbox{ by \eqref{tiger}}\\
&=(b_{t-1}+s+3,\a'''\sqcup \{b_t\}\sqcup [b_t+2,b_{t-1}+s+2])\\
&\longrightarrow (\a'''\sqcup [b_t+2,b_{t-1}+s+2],b_{t-1}+s+3)=\a'''\sqcup \{b_t\}\sqcup [b_t+2,b_{t-1}+s+3]\in SST_{2n}(\varpi_l)
\end{align*}

 Hence, for $b_{t-1}\notin 2\Z,~~ b_t\notin 2\Z$ and $b_t\in\hat\a$,
 \begin{align}&\redu^{-1}(\b)=\a=\a'''\sqcup \{b_t\}\sqcup [b_t+2,b_{t-1}+s+3]\supseteqq \b,\nonumber\\
 &\mbox{ and }\nonumber\\
 &\a=\hat\a\setminus \{b_t+1\}\sqcup[b_{t-1}+s+2,b_{t-1}+s+3]\in\SST(\varpi_l),\nonumber\\
& \mbox{ with $2\le s\in 2\Z$ in  \eqref{induc3} in the definition of $\hat\a=\redu^{-1}(b_1,\dots,b_{t-1})$.}\label{main2}
 \end{align}


\bigskip

\textbf{Case 2.}  $b_t\in 2\Z\Leftrightarrow \s(b_t)=b_t-1$.

If $b_t\in 2\Z$,  then $b_t\in 2\Z\Rightarrow \s(b_t)=b_t-1\notin 2\Z$ and $(\s(b_t)^\vee,\b)=((b_t-1)^\vee,\b')$ with $\b'=$ $(b_1,\dots,b_{t-1})$ $\in SpT_{2n}(\varpi_{t-1})$.

We have two main  cases: either $b_{t-1}<b_t-1$ or $b_{t-1}=b_t-1$.
\begin{enumerate}
\item $b_{t-1}<b_t-1$.

If $b_{t-1}<b_t-1$, then $b_{t-1}\le b_t-2$, if $b_{t-1}\in 2\Z$,
and $b_{t-1}< b_t-2\Leftrightarrow b_{t-1}+2< b_t\Leftrightarrow b_{t-1}+2\le  b_t-1$ if $b_{t-1}\notin 2\Z$.

Hence, $\b'\subseteq  (b_t-1)^\vee$ and by Proposition \ref{prop:collect}, $(1)$,
\begin{align}{R_{2n-1,t-1}}((b_t-1)^\vee,\b')=(\b', (b_t-1)^\vee)=(\c',\d)\in \SpT(\varpi_{t-1})\times \SST(\varpi_{2n-1}). \label{snale3}\end{align}

Then
by induction on $l\ge 3$, one has
\begin{align}\label{induc1new}
\b'=\redu_{l-1}(\hat\a), \mbox{ for some $\hat\a\in \SST(\varpi_{l-1})$ such that $\b'=(b_1,\dots,b_{t-1})\subseteq\hat\a$}
\end{align}
and  by induction on $t\ge 1$ we may write for some $\a''\sqcup \{b_{t-1}\}=\hat \a\cap  [1,b_{t-1}]$, with $b_0:=0$,
\begin{align}\label{induc2new}\hat\a=\begin{cases}\a''\sqcup \{b_{t-1}\}\sqcup [b_{t-1}+1,b_{t-1}+s],~~\mbox{ for some $0\le s\in 2\Z$}& \mbox{ if $b_{t-1}\in 2\Z$}\\
\a''\sqcup \{b_{t-1}\}\sqcup [b_{t-1}+1+1,b_{t-1}+1+s],~~\mbox{ for some $0\le s\in 2\Z$}& \mbox{ if $b_{t-1}\notin 2\Z$}
\end{cases},
\end{align}
such that $s+\ell(\hat\a\cap[1,b_{t-1}])=l-1$.

 Again note if $s=0$ the intervals are empty in \eqref{induc2new}. If they are not empty, $s\ge 2$.

Hence
\begin{align}&(\redu^{-1}_{l-1}\otimes id){R_{2n-1,t-1}}((b_t-1)^\vee,\b')=(\hat\a, (b_t-1)^\vee) \nonumber\\
& \mbox{ for some $\hat\a\in \SST(\varpi_{l-1})$ satisfying  \eqref{induc2new}}.
\end{align}

We have to analyse two cases

\begin{itemize}
\item $ \hat\a=(a_1,\dots, a_{l-1})\subset [1,b_t-2]$.
\end{itemize}
$$R_{l-1,2n-1}(\redu^{-1}_{l-1}\otimes id){R_{2n-1,t-1}}((b_t-1)^\vee,\b')=R_{l-1,2n-1}(\hat\a, (b_t-1)^\vee)=( (b_t-1)^\vee,\hat\a)\Rightarrow$$
$$(K\otimes id)R_{l-1,2n-1}(\redu^{-1}_{l-1}\otimes id){R_{2n-1,t-1}}((b_t-1)^\vee,\b')=(K\otimes id)( (b_t-1)^\vee,\hat\a)$$
$$=(K\otimes id)( \s(b_t)^\vee,\hat\a) =(b_t,\hat\a)\longrightarrow (\hat\a, b_t)\in SST(\varpi_l).$$

Hence for $b_t\in 2\Z$, $b_{t-1}<b_t-1$, and $ \hat\a\subset [1,b_t-2]$.

\begin{align}\redu^{-1}(\b)=\a=(\hat\a, b_t), \mbox{ where $\hat\a=\redu^{-1}(b_1,\dots,b_{t-1})$}\label{main3}
.\end{align}

\bigskip
\begin{itemize}
\item  $ b_t-1\in \hat \a\Leftrightarrow \s(b_t)\in \hat\a$.
\end{itemize}

By assumption, $b_t\in 2\Z
 \Leftrightarrow b_t-1\notin 2\Z$, and thus
 \begin{align}b_{t-1}<b_t-1\Leftrightarrow b_{t-1}+1\le b_t-1\Leftrightarrow b_{t-1}+1+1\le b_t.\label{rino}
 \end{align}

 When $b_{t-1}\in 2\Z$, since $ b_t-1\in \hat \a$, it means in \eqref{induc2new} that $s\ge 2$. Therefore $[b_t-1,b_t]\subseteq [b_{t-1}+1,b_{t-1}+s]$ with $s\ge 2$ in \eqref{induc2new}.

 When $b_{t-1}\notin 2\Z$ then $b_{t-1}+1+1\notin 2\Z$ and in this case one has $b_{t-1}+1+1< b_t$ and $b_{t-1}+1+1\le  b_t-1$ with $ b_t-1\in \hat \a$. Thus, it means in \eqref{induc2new} that $s\ge 2$. Therefore,

 $$[b_t-1, b_t]\subseteq [b_{t-1}+1+1,b_{t-1}+1+s] \mbox{ with  $s\ge 2$  in \eqref{induc2new}}.$$

\medskip
Since $b_{t-1}\le b_t-2$, from \eqref{rino}, let $\a''':=\hat\a\cap[1,b_t-2]$, and
we may then rewrite \eqref{induc2new} as

\begin{align}\label{induc4}\hat\a=\begin{cases}\a'''\sqcup [b_{t}-1,b_{t-1}+s],~~\mbox{ for some $2\le s\in 2\Z$}& \mbox{ if $b_{t-1}\in 2\Z$}\\
\a'''\sqcup [b_{t}-1,b_{t-1}+1+s],~~\mbox{ for some $2\le s\in 2\Z$}& \mbox{ if $b_{t-1}\notin 2\Z$},
\end{cases}
\end{align}
such that $s+\ell(\hat\a\cap[1,b_{t-1}])=l-1$.
\begin{obs}With $b_{t-1}\le b_t-2$, $\hat\a\cap[1,b_t-2]=\hat\a\cap[1,b_{t-1}]\sqcup \hat\a\cap[b_{t-1}+1,b_t-2]$. Considering \eqref{induc2new} with $b_{t-1}\in 2\Z$, one has 
$$\hat\a=\hat\a\cap [1,b_{t-1}]\sqcup [b_{t-1}+1,b_{t-1}+s]=\hat\a\cap [1,b_{t-1}]\sqcup [b_{t-1}+1,b_t-2]\sqcup [b_{t}-1,b_{t-1}+s]$$
$$=\hat\a\cap[1,b_t-2]\sqcup [b_{t}-1,b_{t-1}+s].$$
For $b_{t-1}\notin 2\Z$ the proof is similar.
\end{obs}

One has \begin{align}&R_{l-1,2n-1}(\redu^{-1}_{l-1}\otimes id){R_{2n-1,t-1}}((b_t-1)^\vee,\b')=\nonumber\\
&R_{l-1,2n-1}(\redu^{-1}_{l-1}\otimes id)(\b', (b_t-1)^\vee)=R_{l-1,2n-1}(\hat\a,(b_t-1)^\vee)\nonumber\\
& \mbox{ for some $\hat\a\in \SST(\varpi_{l-1})$ satisfying \eqref{induc4}}.\label{pig}
\end{align}

\bigskip
\emph{If $b_{t-1}\in 2\Z$}, from Proposition \ref{prop:collect}, \eqref{camel}, \eqref{camel2} and \eqref{induc4}

\begin{align}&\eqref{pig}\Leftrightarrow R_{l-1,2n-1}(\hat\a, (b_t-1)^\vee)=R_{l-1,2n-1}(\a''' \sqcup [b_t-1,b_{t-1}+s], (b_t-1)^\vee)\nonumber\\
&=((b_{t-1}+s+1)^\vee, \a''' \sqcup[b_t,b_{t-1}+s+1])\nonumber\\
&=(\s(b_{t-1}+s+2)^\vee,\a''' \sqcup [b_t,b_{t-1}+s+1]), \nonumber\\
&\mbox{ by $b_{t-1}+s+1\notin 2\Z\Rightarrow \s(b_{t-1}+s+2)=b_{t-1}+s+1$}, \label{vulture2}
\end{align}
and
\begin{align*}&(K\otimes id)R_{l-1,2n-1}(\redu^{-1}_{l-1}\otimes id){R_{2n-1,t-1}}((b_t-1)^\vee,\b')\\
&=(K\otimes id)R_{l-1,2n-1}(\hat\a, (b_t-1)^\vee)=(K\otimes id)R_{l-1,2n-1}(\a''' \sqcup [b_t-1,b_{t-1}+s], (b_t-1)^\vee)\\
&=(K\otimes id)(\s(b_{t-1}+s+2)^\vee,\a''' \sqcup [b_t,b_{t-1}+s+1])\\
&=(b_{t-1}+s+2,\a''' \sqcup [b_t,b_{t-1}+s+1]),   \mbox{ by \eqref{vulture2}}\ \\
&\longrightarrow (\a''' \sqcup [b_t,b_{t-1}+s+1],b_{t-1}+s+2)\\
&=\a''' \sqcup [b_t,b_{t-1}+s+2]\in SST_{2n-1}(\varpi_l)
\end{align*}

Hence, for $~b_t\in 2\Z,~~~~b_{t-1}\in 2\Z$ and $b_{t-1}<b_t-1$ and $\s(b_t)\in \hat\a$,
\begin{align*}&\redu^{-1}(\b)=\a=\a''' \sqcup [b_t,b_{t-1}+s+2]\nonumber\\
&=\a''' \sqcup\{b_t\} \sqcup[b_t+1,b_{t-1}+s+2]\supseteqq\b,\nonumber\\
&\a=\hat\a\setminus\{b_t-1\}\sqcup [b_{t-1}+s+1,b_{t-1}+s+2], \mbox{ for some $2\le s\in 2\Z$ as in \eqref{induc2new}.}\\
&\hat \a=\redu^{-1}(b_1,\dots,b_{t-1}).
\end{align*}

\bigskip
\emph{If $b_{t-1}\notin 2\Z$}, from Proposition \ref{prop:collect}, \eqref{camel}, \eqref{camel2} and \eqref{induc4}

\begin{align}&\eqref{pig}\Leftrightarrow R_{l-1,2n-1}(\hat\a, (b_t-1)^\vee)=R_{l-1,2n-1}(\a''' \sqcup [b_t-1,b_{t-1}+s+1], (b_t-1)^\vee)\nonumber\\
&=((b_{t-1}+s+2)^\vee, \a''' \sqcup [b_t,b_{t-1}+s+2])\nonumber\\
&=(\s(b_{t-1}+s+3)^\vee,\a''' \sqcup [b_t,b_{t-1}+s+2]), \nonumber\\
&\mbox{ by $b_{t-1}+s+2\notin 2\Z\Rightarrow \s(b_{t-1}+s+3)=b_{t-1}+s+2$}, \label{vulture3}
\end{align}
and
\begin{align*}&(K\otimes id)R_{l-1,2n-1}(\redu^{-1}_{l-1}\otimes id){R_{2n-1,t-1}}((b_t-1)^\vee,\b')\\
&=(K\otimes id)R_{l-1,2n-1}(\a, (b_t-1)^\vee)=(K\otimes id)R_{l-1,2n-1}(\a''' \sqcup [b_t-1,b_{t-1}+s+1], (b_t-1)^\vee)\\
&=(K\otimes id)(\s(b_{t-1}+s+3)^\vee,\a''' \sqcup[b_t,b_{t-1}+s+2])\\
&=(b_{t-1}+s+3,\a''' \sqcup[b_t,b_{t-1}+s+2]),   \mbox{ by \eqref{vulture3}}\ \\
&\longrightarrow (\a'''\sqcup [b_t,b_{t-1}+s+2],b_{t-1}+s+3)=\a''' \sqcup[b_t,b_{t-1}+s+3]\in SST_{2n-1}(\varpi_l)
\end{align*}

Hence, for $b_{t-1}\notin 2\Z,~~~b_t\in 2\Z$, and  $b_{t-1}\le b_t-2$, and $\s(b_t)\in \hat\a$,

\begin{align}&\redu^{-1}(\b)=\a=\a'''\sqcup \{b_t\}\sqcup [b_t+1,b_{t-1}+s+3]\supseteqq \b,\nonumber\\
&\a=\hat \a\setminus \{b_t-1\}\sqcup[b_{t-1}+s+2,b_{t-1}+s+3],  \mbox{ for some $2\le s\in 2\Z$ as in \eqref{induc2new},}\nonumber\\
&\hat \a=\redu^{-1}(b_1,\dots,b_{t-1}).\label{main4}
\end{align}

\bigskip
\item  $b_{t-1}=b_t-1\notin 2\Z$

Recall $\b=(b_1,\dots,b_t)\in SpT_{2n}(\varpi_t)$ and $\b'=(b_1,\dots,b_{t-1}=b_t-1)\in SpT_{2n}(\varpi_{t-1})$.
Unlike the previous cases we  have
$$\b'\subsetneq \s(b_t)^\vee=(b_t-1)^\vee.$$

 Set
$r :=max\{i \in [1,t-1] : b_{t-i} = b_t-i\}$ and  $\b'' := (b_1, \dots , b_{t-r-1})$. Note $r\ge 1$ and \begin{align}b_{t-r-1}<b_t-r-1<b_t-r=b_{t-r}.\label{ineq}
\end{align}

Then, we have $\b = \b'' \sqcup [b_t-r, b_t],$  $\b'=\b'' \sqcup[b_t-r, b_t-1]\in \SpT(\varpi_{t-1})$,
$\s(b_t) = b_t-1$, and

from Proposition \ref{prop:collect}, \eqref{camel}, \eqref{camel1},

\begin{align*}&\eqref{goal}\Leftrightarrow R_{2n-1,t-1}((b_t-1)^\vee, \b') = R_{2n-1,t-1}((b_t-1)^\vee, \b'' \sqcup[b_t-r, b_t-1])\\
&=(\b'' \sqcup [b_t-r-1, b_t-2], (b_t-r-1)^\vee)\in SpT_{2n}(\varpi_{t-1})\times SST_{2n}(\varpi_{2n-1}). 
\end{align*}

We have to show that
\begin{align}\bar \b:=\b'' \sqcup [b_t-r-1, b_t-2]=\b'\setminus\{b_t-1\}\sqcup \{b_t-r-1\}\in SpT_{2n}(\varpi_{t-1}).\label{proofsymp}
\end{align}

\begin{itemize}
\item For $r\notin 2\Z$ and $b_t\in 2\Z$, one has $b_t-r\notin 2\Z$, and $b_t-r-1\in 2\Z$.
\end{itemize}
We then write \eqref{proofsymp} as $$\eqref{proofsymp}\Leftrightarrow\b'' \sqcup \{b_t-r-1\} \sqcup[b_t-r, b_t-2].$$

If $b_t-r-1\in 2\Z$ is removable in \eqref{proofsymp}, by Proposition \ref{prop:rem}, $\s(b_t-r-1)=b_t-r-2\notin 2\Z$ is removable.
Since, from \eqref{ineq}, $b_{t-r-1}\le b_t-r-2$:  either $b_{t-r-1}= b_t-r-2$ which implies $b_{t-r-1}$ removable in $\b'':= (b_1, \dots , b_{t-r-1})\subseteq \b'\in SpT_{2n}(\varpi_{t-1})$, a contradiction; or $b_{t-r-1}< b_t-r-2$ and in this case $b_t-r-2\notin \b''$ and thus not removable from $\b'' \sqcup [b_t-r-1, b_t-2]$.

Let $r\ge 3$.  If $b_t-r$ is removable in \eqref{proofsymp}, by Proposition \ref{prop:rem},

$$b_t-r<2(t-r-1+2)\Leftrightarrow b_t<2t-r+2,~~~r\ge 3\Rightarrow b_t<2t-1.$$
A contradiction with $\b=(b_1,\dots,b_t)\in SpT_{2n}(\varpi_t)$.
 Therefore , $b_t-r, b_t-(r-1)$ are not removable in \eqref{proofsymp}.
 By reverse induction,  assume for $5\le j\le r$ that $b_t-j, b_t-(j-1)$ are not removable whenever $j\notin 2\Z$.
 Let us prove the claim for $j-2$ with $j\notin 2\Z$. By Proposition \ref{prop:rem}, suppose that we had
 \begin{align*}&b_t-(j-2)< 2(t-r-1+1+r-(j-2)+1)=2(t-(j-2)+1)\\
 &\Leftrightarrow b_t-(j-2)<2t-2(j-2)+2\Leftrightarrow b_t<2t-(j-2)+2\\
 &\Leftrightarrow b_t<2t-j+4\\
 &\Leftrightarrow b_t<2t-1, \mbox{ by $j\ge 5$},
 \end{align*}
 which is a contradiction with $\b=(b_1,\dots,b_t)$ symplectic.
 Hence, in this case, \eqref{proofsymp} holds.

 \medskip
\begin{itemize}
 \item For $2\le r\in 2\Z$ with $b_t\in 2\Z$, one has $b_t-r\in 2\Z$ and $b_t-r-1\notin 2\Z$.
\end{itemize}

 If $b_t-r-1\notin 2\Z$ is removable in \eqref{proofsymp}, by Proposition \ref{prop:rem}, $\s(b_t-r-1)=b_t-r\in 2\Z$ is removable in \eqref{proofsymp}. Therefore, by Proposition \ref{prop:rem}

 \begin{align*}
& b_t-r<2(t-r-1+1+1)-1\Leftrightarrow b_t-r<2t-2r+2-1\\
 &\Leftrightarrow b_t<2t-r+1\le 2t-1, \mbox{ by $r\ge 2$}\\
&\Leftrightarrow b_t<2t-1
\end{align*}
which is a contradiction with $\b=(b_1,\dots,b_t)$ symplectic. Hence $b_t-r-1, b_t-r$ are not removable in \eqref{proofsymp}.

By reverse induction,  assume for $4\le j\le r$ that $b_t-j-1, b_t-j$ are not removable whenever $j\in 2\Z$.
 Let us prove the claim for $j-2$ with $j\in 2\Z$. By Proposition \ref{prop:rem}, suppose that we had
 \begin{align*} &b_t-(j-2)< 2(t-r-1+r+1-(j-2)+1)-1=2(t-(j-2)+1)-1\\
 &\Leftrightarrow b_t-(j-2)<2t-2(j-2)+1\Leftrightarrow b_t<2t-(j-2)+1\\
 &\Leftrightarrow b_t<2t-j+3\\
 &\Leftrightarrow b_t<2t-1, \mbox{ by $j\ge 4$},
 \end{align*}
 which is a contradiction with $\b=(b_1,\dots,b_t)$ symplectic. Hence, also in this case, \eqref{proofsymp} holds.

Since $\bar\b=\b'\setminus\{b_t-1\}\sqcup \{b_t-r-1\}=(b_1, \dots , b_{t-r-1}) \sqcup [b_t-r-1, b_t-2]\in SpT_{2n}(\varpi_{t-1})$, by induction on $l$,
\begin{align}& \redu^{-1}_{l-1}(\bar \b)= \redu^{-1}_{l-1}((b_1, \dots , b_{t-r-1}) \sqcup [b_t-r-1, b_t-2])\nonumber\\
&=\bar\a\in SST_{2n}(\varpi_{l-1}), ~ \mbox{ and } \bar\b=(b_1, \dots , b_{t-r-1}) \sqcup [b_t-r-1, b_t-2]\subseteq \bar\a \label{hat1}.
\end{align}

By induction on $t\ge 0$, recalling \eqref{ineq}, 
define the column 
$$ \x:=\overline\a\cap [1,b_t-r-2]\supseteqq \b''=(b_1, \dots , b_{t-r-1}),$$ and   one has
\begin{align}\bar\a=\x\sqcup [b_t-r-1, b_t-2+s], \mbox{ for some $0\le s\in 2\Z$ }.
\label{hat2}
\end{align}
such that $\ell(\overline\a\cap [1,b_t-r-2])+r+s=l-1$.

Therefore
\begin{align*}&(K\otimes id)R_{l-1,2n-1}(\redu^{-1}_{l-1}\otimes id)(\overline\b, (b_t-r-1)^\vee)\\
&=(K\otimes id)R_{l-1,2n-1}(\x\sqcup [b_t-r-1, b_t-2+s], (b_t-r-1)^\vee),
\mbox{ by  \eqref{hat1}, \eqref{hat2}}\\
&(K\otimes id)(( b_t-2+s+1)^\vee,\x\sqcup [b_t-r, b_t-2+s+1]),
\mbox{ by \eqref{camel2}}\\
&=(K\otimes id)(\s( b_t-2+s+2)^\vee,\x\sqcup [b_t-r, b_t-2+s+1]),\\
&\mbox{ by $b_t-2+s+2\in 2\Z\Rightarrow \s(b_t-2+s+2)=b_t-2+s+1$,}\\
&=( b_t-2+s+2,\x\sqcup [b_t-r, b_t-2+s+1])\\
&\longrightarrow (\x\sqcup [b_t-r, b_t-2+s+1], b_t-2+s+2)=\x\sqcup [b_t-r, b_t-2+s+2]\in SST_{2n}(\varpi_{l}).
\end{align*}

Hence, for $b_t\in 2\Z$ and  $\b = (b_1, \dots , b_{t-r-1}) \sqcup [b_t-r, b_t]$ for some $r\ge 1$, with $b_{t-r-1}<b_t-r-1$,

\begin{align}&\redu^{-1}(\b)=\a=\overline\a\cap [1,b_t-r-2]\sqcup [b_t-r, b_t-2+s+2]\supseteqq \b= (b_1, \dots , b_{t-r-1}) \sqcup [b_t-r, b_t], \nonumber\\
&\mbox{ and }\nonumber\\
&\a=\overline\a\setminus \{b_t-r-1\}\sqcup [b_t-2+s+1, b_t-2+s+2], \mbox{ with $0\le s\in 2\Z$  in \eqref{hat2}}\nonumber\\
&\mbox{ where $\overline\a=\redu^{-1}((b_1, \dots , b_{t-r-1}) \sqcup [b_t-r-1, b_t-2]).$}\label{main5}
\end{align}
\end{enumerate}
\end{proof}

As a consequence of the previous theorem, the injectivity of the reduction map, and  Proposition 4.4.1 in \cite{watanabe}, the inverse reduction map $\redu^{-1}$ for $t\ge 2$ also decomposes, in reverse order,  into the inverses of the maps in \eqref{decomposeredu}
 (on suitable domains). For those inverses on the suitable domains, we now explicitly write the inverses needed in next theorem.

 Let $l\ge t\ge 2$,

 \begin{align}\bigwedge&:\{((a),a_1,\dots,a_{t-1})\in SST_{2n}(\varpi_1)\times SST_{2n}(\varpi_{t-1}): a>a_{t-1}\}\longrightarrow SST_{2n}(\varpi_{t})\nonumber\\
  &~~~~~~~~~~~~~\qquad\qquad\qquad\qquad\qquad \qquad\qquad\qquad((a),(a_1,\dots, a_{t-1}))\mapsto(a_1,\dots, a_{t-1},a),
  \end{align}

  \begin{align}{\bigwedge}^{-1}&:SST_{2n}(\varpi_{t})\longrightarrow \{((a),a_1,\dots,a_{t-1})\in SST_{2n}(\varpi_1)\times SST_{2n}(\varpi_{t-1}): a>a_{t-1}\}\nonumber\\
  &~~~~~~~~~~~~~\quad (a_1,\dots,a_{t-1},a)\mapsto((a),(a_1,\dots, a_{t-1})),\label{restrictvee}
  \end{align}

  \begin{align}K&:SST_{2n}(\varpi_{1})\longrightarrow SST_{2n}(\varpi_{2n-1})\nonumber\\
  &~~~~~~~~~~~~~\quad\quad\quad\quad  (a)\mapsto\s(a)^\vee,
  \end{align}

 \begin{align}K^{-1}&:SST_{2n}(\varpi_{2n-1})\longrightarrow SST_{2n}(\varpi_1)\nonumber\\
  &~~~~~~~~~~~~~\quad\quad\qquad \quad \s(a)^\vee\mapsto(a), \mbox{ $a\in [2n]$},
  \end{align}

  \begin{align}{\bigvee}&:SST_{2n}(\varpi_l)\longrightarrow  \{((a), (a_1,\dots, a_{l-1}))\in SST_{2n}(\varpi_1)\times SST_{2n}(\varpi_{l-1}): a> a_{l-1}\}\nonumber\\
  &~~~~~~~~~~~~~\qquad\qquad \qquad\qquad(a_1,\dots,a_{l-1},a)\mapsto((a),(a_1,\dots, a_{l-1})),\label{restrictwedge}
  \end{align}

  \begin{align}{\bigvee}^{-1}&: \{((a), (a_1,\dots, a_{l-1}))\in SST_{2n}(\varpi_1)\times SST_{2n}(\varpi_{l-1}): a>a_{l-1}\}\longrightarrow SST_{2n}(\varpi_l)\nonumber\\
  &~~~~~~~~~~~~~\qquad\qquad \qquad\qquad((a),(a_1,\dots,a_{l-1}))\mapsto(a_1,\dots, a_{l-1},a),\label{restrictwedgeinverse}
  \end{align}

\begin{thm}\label{inversemaps}
For a fixed $n\in\mathbb{N}$, let $l\in [0,2n]$ and $t\in[0,n]$ such that  $0\le t\le min\{l,2n-l\}$ and $l-t\in 2\Z$. Then
\begin{align}
&\redu^{-1}=\redu_t^{-1}:SpT_{2n}(\varpi_t)\rightarrow SST_{2n}(\varpi_l)\nonumber\\
\end{align}
is inductively defined as follows
\begin{enumerate}
\item for $t=0$, $l\in 2\Z$ in which case $\redu^{-1}(())=(1,\dots, l)$,
\item for $t=1$,  $l\notin 2\Z$ in which case for ${\bf b}=(b) \in SpT_{2n}(\varpi_1)$, one has
$$\redu^{-1}({\bf b})=(1,\dots,l_1)(b) T_1\in SST_{2n}(\varpi_l),$$ where

\begin{align}({b})T_1=
\begin{cases}
({b}) ( b+1,\dots, b+l_{2}),
  &
 \mbox{if } b\in 2\mathbb{Z}\\
({b}) ( b+1+1,b+1+2,\dots, b+1+l_{2}),
 &\mbox{if } b\notin 2\mathbb{Z}\\
\end{cases},
\end{align}
and
\begin{align}
&l_1=
\begin{cases} min\{b-2, l-1\},& \mbox{if } b\in 2\mathbb{Z}\\
min\{ b-1, l-1\},&\mbox{if } b\notin 2\mathbb{Z},\\
\end{cases}
\qquad \mbox{ and }  l_{2}=l-1-l_1.
\end{align}

\item for $t\ge  2$,   one has $l\ge t\ge 2$, and
\begin{align}
\redu^{-1}={\bigvee}^{-1}\circ (K^{-1},id)\circ R_{l-1,2n-1}\circ (\redu_{t-1}^{-1},id)\circ R_{2n-1,t-1}\circ (K,id)\circ {\bigwedge}^{-1}. \label{inversefactorization}
\end{align}
\end{enumerate}

\end{thm}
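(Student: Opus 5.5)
The plan is to derive Theorem \ref{inversemaps} formally from Theorem \ref{thm:composition} together with Proposition \ref{prop:surjective}, using that every factor in \eqref{decomposeredu} is a bijection between suitable domains. By Theorem \ref{thm:composition} and Proposition \ref{prop:surjective}, $\redu_l$ is a bijection onto $\bigsqcup_t SpT_{2n}(\varpi_t)$. Hence $\redu^{-1}$ restricted to $SpT_{2n}(\varpi_t)$ is well defined with values in $SST_{2n}(\varpi_l)$, once $l$ with $l-t\in 2\Z$ and $t\le\min\{l,2n-l\}$ is fixed.

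Items (1) and (2) are exactly the cases $t=0$ and $t=1$ treated at the start of the proof of Proposition \ref{prop:surjective}. For $t=0$, Corollary \ref{empty0} gives $\redu(1,\dots,l)=()$. For $t=1$, the column \eqref{t=1} satisfies $\redu(\a)=(b)$. To confirm this I would check directly with Proposition \ref{prop:rem}(2),(3) that the removal pairs are exactly the pairs $[1,l_1]$ (an even-length initial interval, with $l_1$ even by construction) and the pairs in the tail interval. The entry $b$ is not removable because it sits at the boundary of these intervals. Injectivity then identifies this $\a$ as $\redu^{-1}((b))$.

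For item (3), with $t\ge 2$, I invert \eqref{decomposeredu} factor by factor in reverse order.
\begin{enumerate}
\item Since $\redu(\a)=\b$ has length $t\ge 2$, we have $\b\neq()$, so $\pi$ acts as the identity and $\pi^{-1}$ may be dropped. The case $\b=(1,2)$ never occurs, because $(1,2)$ is not symplectic by Proposition \ref{prop:symplectic1}(5).
\item $\bigwedge^{-1}$ and $\bigvee^{-1}$ are the obvious inverses on the stated restricted domains.
\item $K$ is a bijection $SST_{2n}(\varpi_1)\to SST_{2n}(\varpi_{2n-1})$ because $\s$ is an involution.
\item $R_{t-1,2n-1}^{-1}=R_{2n-1,t-1}$ and $R_{2n-1,l-1}^{-1}=R_{l-1,2n-1}$, by the bijectivity statement for combinatorial $R$-matrices.
\end{enumerate}
Composing these inverses gives \eqref{inversefactorization}, where the inverse of the middle step is $(\redu_{l-1}^{-1},id)$, written $\redu_{t-1}^{-1}$ in the statement to indicate that it acts on $SpT_{2n}(\varpi_{t-1})$.

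The main obstacle is domain bookkeeping: each intermediate inverse must be applied to an element actually in its domain. There are two points to check.
\begin{itemize}
\item $R_{2n-1,t-1}(\s(b_t)^\vee,\b')$ must have a first component that is symplectic of length $t-1$, so that $\redu_{l-1}^{-1}$ applies. This is precisely \eqref{needsproof}. Cases 1 and 2(1) are settled by Proposition \ref{prop:collect}(1), and Case 2(2) by the symplecticity argument for $\bar\b$ in \eqref{proofsymp}.
\item The output of $R_{l-1,2n-1}$ must have a first component of the form $\s(a)^\vee$ with $a$ exceeding the last entry of the second component, so that $\bigvee^{-1}$ applies. This is verified case by case in \eqref{main1}, \eqref{rankrelation1}, \eqref{main2}, \eqref{main3}, \eqref{main4} and \eqref{main5}.
\end{itemize}

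Finally, the output of the composite must equal $\redu^{-1}(\b)$. Applying $\redu$, factored as in \eqref{decomposeredu}, to this output retraces each step backwards and returns $\b$. Here one uses that the intermediate length $t-1$ is consistent, and that Proposition 4.4.1 of \cite{watanabe} gives the case $\b\ne(1,2)$. Injectivity of $\redu$ then finishes the argument.
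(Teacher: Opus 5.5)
Your proposal is correct and follows essentially the paper's route: items (1) and (2) via Corollary \ref{empty0} and Proposition \ref{prop:rem}, and item (3) by inverting the factorization \eqref{decomposeredu} step by step, using Proposition 4.4.1 of \cite{watanabe} and the fact that $(1,2)$ is not symplectic, so $\pi$ acts as the identity. The case-by-case domain bookkeeping you flag as the main obstacle is in fact automatic once surjectivity is known: running the forward chain on $\a=\redu^{-1}(\b)$ produces exactly the intermediate objects of the inverse chain, so each inverse map is applied within its domain.
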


\begin{proof} The composition \eqref{inversefactorization} is well defined by Proposition 4.4.1 in \cite{watanabe} and because $(1,2)\notin SpT_{2n}(\varpi_2)$. Also in this case $\pi=id$ and ${\bigvee}^{-1}, {\bigwedge}^{-1}$,  restrict to suitable domains as in \eqref{restrictvee} and
\eqref{restrictwedge}.

Assertion $(1)$  follows from Corollary \ref{empty0}. Assertion $(2)$ follows from Proposition \ref{prop:rem}. The computations show $\rem((1,\dots,l_1)(b) T_1)=(1,\dots,l_1)T_1$ and $\redu((1,\dots,l_1)(b)T_1)=(b)$.

We prove assertion $3$. Consider the maps above and Proposition 4.4.1 in \cite{watanabe}. Let $l\ge t\ge 2$ and ${\bf b}=(b_1,\dots,b_t)\in SpT_{2n}(\varpi_t)$. Then $\redu^{-1}(\bf b)$ is obtained as follows:

\begin{align}&{\bf b}=(b_1,\dots, b_t)\in SpT_{2n}(\varpi_t)\overset{{\bigwedge}^{-1}}\longrightarrow ((b_t),(b_1,\dots,b_{t-1}))\in SpT_{2n}(\varpi_1)\times SpT_{2n}(\varpi_{t-1})\nonumber\\
&\overset{K\otimes id}\longrightarrow (\s(b_t)^\vee, {\bf b'})\in SST_{2n-1}(\varpi_{2n-1})\times SpT_{2n}(\varpi_{t-1}),~~{\bf b'}~=(b_1,\dots,b_{t-1})\nonumber\\
&\overset{R_{2n-1,t-1}}\longrightarrow ({\bf c'},{\bf d}), \mbox{ for some $(\c',\d) \in SpT_{2n}(\varpi_{t-1})\times SST_{2n}(\varpi_{2n-1})$}\nonumber\\
&\overset{\redu_{l-1}^{-1}\otimes id}\longrightarrow ({\bf c}, {\bf d})\in SST_{2n}(\varpi_{l-1})\times SST_{2n}(\varpi_{2n-1})\nonumber\\
&\overset{R_{l-1, 2n-1}}\longrightarrow (\s(a_l)^\vee,\a'= (a_1,\dots,a_{l-1}))\in SST_{2n}(\varpi_{2n-1})\times SST_{2n}(\varpi_{l-1}) \mbox{ for some $a_l>a_{l-1}$}\nonumber\\
&\overset{K^{-1}\otimes id}\longrightarrow ((a_l),\a'=(a_1,\dots,a_{l-1})) \in SST_{2n}(\varpi_{1})\times SST_{2n}(\varpi_{l-1}), \nonumber\\
&\overset{{\bigvee}^{-1}}\longrightarrow \a=(a_1,\dots,a_l)\in SST_{2n}(\varpi_l).
\end{align}
\end{proof}
\subsection{Main Theorem}
As a bonus of the lengthy proof of Proposition \ref{prop:surjective} we get the Main Theorem below which packs the information in \eqref{inducxx}, \eqref{main1}, \eqref{rankrelation1}, \eqref{main2}, \eqref{main3}, \eqref{main4},and \eqref{main5}.

\begin{thm}(Main Theorem) \label{th:main} For a fixed $n\in\mathbb{N}$, let $l\in[0,2n]$. The inverse reduction map

\begin{align}\redu^{-1}=\redu^{-1}_l:\bigsqcup_{\begin{smallmatrix} 0\le t\le min\{l,2n-l\}\\
l-t\in 2\mathbb{Z}
\end{smallmatrix}}SpT_{2n}(\varpi_t)&\rightarrow SST_{2n}(\varpi_l), \quad \b=(b_1,\dots,b_{t-1},b_t)\mapsto \redu^{-1}(\b)=\a
\end{align}
 is such that
 \begin{enumerate}
 \item for $l=0,1$, either $t=0$ or respectively $t=1$, in which cases $\redu^{-1}=id$,
  where $SpT_{2n}(\varpi_0)=SST_{2n}(\varpi_0)=\{()\}$ and $SpT_{2n}(\varpi_1)=SST_{2n}(\varpi_1)$.

  \item for $l=2$ either $t=0$ in which case ${\bf b}=()$   and $\redu^{-1}()=\a=(1,2)\in SST_{2n}(\varpi_2)$; or respectively $t=2$  in which case $\b=(b_1,b_2) \in SpT_{2n}(\varpi_2)=SST_{2n}(\varpi_2)\setminus \{(1,2)\}$,  and $\redu^{-1}(\b)=\b$.
 \item for $l\ge 3$, if $t=0$ and $l\in 2\Z$, then ${\bf b}=()$   and $\a=(1,2,\dots,l)\in SST_{2n}(\varpi_l)$; and if $t=1$ and $3\le l\notin 2\Z$, then $\b=(b)\in \SpT(\varpi_1)$, and
     $\redu_l^{-1}((b))=\a\in\SST(\varpi_l)$ defined by
 \begin{align}\label{induczeronethm}\hat\a:=[1,l-1]=\redu_{l-1}^{-1}()
\end{align}
and
\begin{align}\label{inducxxthm}\a=\begin{cases}(\hat \a,b),& \mbox{ if $\hat \a\subseteq[1,b-1]$},\\
\hat \a\setminus \{b+1\}\sqcup [l,l+1],& \mbox{ if $b\notin 2\Z$ and  $b\in\hat \a=[1,l-1]$},\\
\hat \a\setminus \{b-1\}\sqcup  [l,l+1],& \mbox{ if $b\in 2\Z$ and  $b\in\hat \a=[1,l-1]$},\\
\end{cases}
\end{align}

 \item Let $l\ge 3$ and  $t\ge 2$.
 \begin{itemize}
 \item for $\b\in SpT_{2n}(\varpi_t)$ without a nonempty interval fixed by the parity involution $\s$ as a suffix, define $$\hat\a:=\redu_{l-1}^{-1}(b_1,\dots,b_{t-1})\in SST_{2n}(\varpi_{l-1}).$$ Then

     \begin{align}\label{inducthm}\hat\a=\begin{cases}\hat\a\cap [1,b_{t-1}-1]\sqcup \{b_{t-1}\}\sqcup [b_{t-1}+1,b_{t-1}+s],~~\mbox{ }& \mbox{ if $b_{t-1}\in 2\Z$}\\
\hat\a\cap [1,b_{t-1}-1]\sqcup \{b_{t-1}\}\sqcup [b_{t-1}+1+1,b_{t-1}+1+s],~~\mbox{ }& \mbox{ if $b_{t-1}\notin 2\Z$},
\end{cases}
\end{align}
where  $0\le s\in 2\Z$ is  such that   $s+\ell(\hat\a\cap[1,b_{t-1}])=l-1$
and

\begin{align}\label{rankmain}\a=\begin{cases}(\hat \a,b_t),& \mbox{ if $\hat \a\subseteq[1,b_t-1]$ and $b_t\notin 2\Z$},\\
\hat\a\setminus\{\s(b_t)\}\sqcup [b_{t-1}+1+s, b_{t-1}+2+s],& \mbox{ if $b_t\in\hat \a$ , $b_{t-1}\in 2\Z$, $b_t\notin 2\Z$},\\
\hat\a\setminus\{\s(b_t)\}\sqcup [b_{t-1}+2+s, b_{t-1}+3+s],& \mbox{ if $b_t\in\hat \a$ , $b_{t-1}\notin 2\Z$, $b_{t}\notin 2\Z$},\\
(\hat \a,b_t),& \mbox{ if $\hat \a\subseteq[1,b_t-2]$ and $b_{t-1}<b_t-1$, $b_t\in 2\Z$},\\
\hat\a\setminus\{\s(b_t)\}\sqcup [b_{t-1}+s+1,b_{t-1}+s+2], & \mbox{ if $\s(b_t)\in\hat \a$ and $b_{t-1}<b_t-1$, $b_{t-1}\in 2\Z$, $b_t\in 2\Z$},\\
\hat\a\setminus\{\s(b_t)\}\sqcup [b_{t-1}+s+2,b_{t-1}+s+3], & \mbox{ if $\s(b_t)\in\hat \a$ and $b_{t-1}<b_t-1$, $b_{t-1}\notin 2\Z$, $b_t\in 2\Z$}.\\
\end{cases}
\end{align}
where $0\le s\in 2\Z$ and $s+\ell(\hat\a\cap[1,b_{t-1}])=l-1.$
\item otherwise,   $\b = (b_1, \dots , b_{t-r-1}) \sqcup [b_t-r, b_t]\in  SpT_{2n}(\varpi_t)$, for some $1\le r<t$, with $b_t\in 2\Z$,  $b_{t-r-1}<b_t-r-1<b_{t-r}=b_t-r$,  and define
    $$\b\setminus \{b_t,\s(b_t)\}\sqcup\{b_t-r-1\}=(b_1, \dots , b_{t-r-1}) \sqcup [b_t-r-1, b_t-2]\in \SpT(\varpi_{t-1}), \mbox{ and }$$
$$\overline\a:=\redu_{l-1}^{-1}((b_1, \dots , b_{t-r-1}) \sqcup [b_t-r-1, b_t-2])\in \SST(\varpi_{l-1}).$$
Then

\begin{align}\bar\a=\bar\a\cap [1,b_t-r-2]\sqcup [b_t-r-1, b_t-2+s], \mbox{ for some $0\le s\in 2\Z$ },
\label{hat2thm}
\end{align}
such that $\ell(\overline\a\cap [1,b_t-r-2])+r+s=l-1$, and

\begin{align}\a=\overline\a\setminus \{b_t-r-1\}\sqcup [b_t-2+s+1, b_t-2+s+2], \mbox{ for some $0\le s\in 2\Z$, }
\end{align}
such that $\ell(\overline\a\cap [1,b_t-r-2])+r+s=l-1$.
\end{itemize}

\end{enumerate}
\end{thm}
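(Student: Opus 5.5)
The Main Theorem repackages the computations already made in the proof of Proposition \ref{prop:surjective}, read through the factorization \eqref{inversefactorization} of Theorem \ref{inversemaps}. The plan is therefore to follow that factorization step by step for each shape of $\b$, and to identify the output with the formulas \eqref{inducxx}, \eqref{main1}, \eqref{rankrelation1}, \eqref{main2}, \eqref{main3}, \eqref{main4} and \eqref{main5}. Injectivity of $\redu$ (Theorem \ref{thm:composition}) together with surjectivity (Proposition \ref{prop:surjective}) guarantees that whatever $\a$ is produced is the unique preimage. The argument is by induction on $l$, with $t$ running inside each level.

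For items (1)--(3) no $R$-matrix is needed. The cases $l=0,1,2$ follow from Theorem \ref{thm:composition}(1),(2), using $SpT_{2n}(\varpi_2)=SST_{2n}(\varpi_2)\setminus\{(1,2)\}$ (Proposition \ref{prop:symplectic1}). For $l\ge3$:
\begin{itemize}
\item if $t=0$, the claim is Corollary \ref{empty0};
\item if $t=1$, I check that the column in \eqref{inducxxthm} coincides with the column \eqref{t=1}, exactly as verified in the proof of Proposition \ref{prop:surjective}, splitting by the parity of $b$ and by whether $b\le l-1$. Then $\redu$ of that column equals $(b)$, by Proposition \ref{prop:rem}.
\end{itemize}

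For item (4) I split according to whether $\b$ ends with a nonempty $\s$-stable interval $[b_t-r,b_t]$, that is, whether $b_t\in2\Z$ and $b_{t-1}=b_t-1$.

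In the first alternative, $\b'=(b_1,\dots,b_{t-1})\subseteq \s(b_t)^\vee$. So $R_{2n-1,t-1}$ acts trivially by Proposition \ref{prop:collect}(1), and the shorter symplectic column is just $\b'$.
\begin{itemize}
\item The inductive hypothesis (the theorem for $l-1$, $t-1$) gives $\hat\a=\redu^{-1}_{l-1}(\b')$. The structural form \eqref{inducthm} of $\hat\a$ must also be propagated inductively: above $b_{t-1}$ (or $b_{t-1}+1$ when $b_{t-1}$ is odd), $\hat\a$ is an interval of even length that starts at an odd number. I would prove this form as part of the induction statement, checking that every formula in \eqref{rankmain} again produces it for $\a$.
\item With that form, either $\s(b_t)\notin\hat\a$, in which case $\hat\a$ lies entirely below $\s(b_t)$, $R_{l-1,2n-1}$ is again trivial, and $\a=(\hat\a,b_t)$.
\item Or $\s(b_t)$ lies in the even $\s$-closed tail interval, and \eqref{camel2} shifts that tail by one. $K^{-1}$ then produces the new last entry, giving the cases of \eqref{rankmain} according to the parities of $b_{t-1}$ and $b_t$.
\end{itemize}

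In the second alternative, \eqref{camel1} sends $((b_t-1)^\vee,\b')$ to $(\bar\b,(b_t-r-1)^\vee)$ with $\bar\b=\b''\sqcup[b_t-r-1,b_t-2]$. Applying the inductive hypothesis to $\bar\b$, writing $\bar\a$ as in \eqref{hat2thm}, and applying \eqref{camel2} and $K^{-1}$ yields the stated formula for $\a$.

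The main obstacle is proving $\bar\b\in SpT_{2n}(\varpi_{t-1})$, so that the induction applies. Here I would use the removal criterion of Proposition \ref{prop:rem}(2),(3) with a reverse induction on the pairs inside $[b_t-r-1,b_t-2]$, split by the parity of $r$. Any removable pair would force $b_t<2t-1$, contradicting the symplecticity of $\b$ (Definition \ref{def:symp}). This is the step I expect to require the most care. The secondary delicate point is carrying the tail-interval shape \eqref{inducthm}/\eqref{hat2thm} through the induction.
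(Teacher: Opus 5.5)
Your proposal is correct and takes the paper's route: the paper proves this theorem through the case-by-case computation in Proposition \ref{prop:surjective}. That computation runs \eqref{inversefactorization} using Proposition \ref{prop:collect}(1) or \eqref{camel1}/\eqref{camel2}, and it gets $\bar\b\in\SpT(\varpi_{t-1})$ from Proposition \ref{prop:rem} together with $b_t\ge 2t-1$. Two points the paper only asserts ``by induction on $t$'' or argues loosely are made explicit in your version: you carry the tail-interval shape \eqref{inducthm}/\eqref{hat2thm} in the induction hypothesis, and you rule out every removable pair of $\bar\b$ (including the pair straddling $b_{t-r-1}$ and $b_t-r-1$) by the single bound $b_t<2t-1$.
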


\begin{obs} To compute $\hat\a$ or $\overline\a$ we  may get assisted by the explicit theorems in \cite{azreduction}.
\end{obs}
\subsection{Illustration of the Main Theorem \ref{th:main}}

In  Example 4, $(3)$ in \cite{azreduction}, for $n=9$, $l=10$,  and $t=6<min\{l=10,18-10\} $ , we have considered ${\bf b} =1,7,(11,12,13,14)\in SpT_{2n}(\varpi_6)$,  and
\begin{align}\redu_t^{-1}({\bf b})={\bf 1}(3,4){\bf 7}({\bf 11,12,13,14}) (15,16)\in SST_{18}(\varpi_{10}),\label{bees}\end{align}
 is easily computed  using our explicit formulas in \cite[Theorem 3]{azreduction}. We shall now compare this global method  by considering the inverse reduction on a  lower rank symplectic column as computed next,
$$1,7,10,(11,12)\in SpT_{2n}(\varpi_9).$$
Note this symplectic column is not a sub-column of $\b$.
We first compute  $\redu^{-1}({\bf b})$ using $\redu^{-1}$ as a composition of several maps \eqref{inversefactorization} and will see the usefulness of explicit formulas when large symplectic columns are considered. Then we avoid the composition of several maps and just use Theorem  \ref{th:main} where the computation   remains essentially on the computation of the inverse reduction map on a symplectic column of rank reduced by one, in this case \eqref{bees}.

\begin{ex} With the previous setup, Example 4, $(3)$ in \cite{azreduction}, $n=9$,  $t=6$, and $l=10$,
\begin{itemize}
\item using Theorem \ref{inversemaps},
\end{itemize}
\begin{align}&{\bf b}=(1,7,11,12,13,14)\in SpT_{2n}(\varpi_t)\overset{\pi^{-1}=id}\longrightarrow {\bf a}\in SpT_{2n}(\varpi_t)\nonumber\\
&\overset{\bigwedge^{-1}}\longrightarrow ((14),(1,7,11,12,13))\in  SpT_{2n}(\varpi_{1})\times SpT_{2n}(\varpi_{t-1})\nonumber\\
&\overset{K\otimes id}\longrightarrow (\s(14)^\vee;(1,7,11,12,13))\in SST_{2n}(\varpi_{2n-1})\times SpT_{2n}(\varpi_{t-1})\nonumber\\
&\overset{R_{2n-1,t-1}}\longrightarrow(({ 1,7,10,11,12});10^\vee)\in  SpT_{2n}(\varpi_{t-1})\times SST_{2n}(\varpi_{2n-1}),  \mbox{ by $[1,9] \sqcup[11,18]=10^\vee$},\nonumber\\
&\overset{\redu^{-1}_{t-1}\otimes id}\longrightarrow (({ 1,3,4,7})\sqcup [{ 10,14}];10^\vee)\in  SST_{2n}(\varpi_{l-1})\times SST_{2n}(\varpi_{2n-1}),\label{eq:unavoidable}
\end{align}
\begin{align}
&
\mbox{ by  \cite[Theorem 3]{azreduction}, $\redu^{-1}_{t-1}(1,7,10,11,12)=1,(3,4)7,10,11,12,(13,14)=(1,3,4,7)\sqcup [10,14]$,}\label{useful}\\
&\overset{R_{l-1,2n-1}}\longrightarrow (\s^\vee(16); (1,(3,4),7))\sqcup [11,15])\in   SST_{2n}(\varpi_{2n-1})\times SST_{2n}(\varpi_{l-1}), \mbox{ by Proposition \ref{prop:collect}}, \eqref{camel},\nonumber\\
&\overset{K^{-1}\otimes id}\longrightarrow (16; 1,(3,4),7,(11,12,13,14),15)\in   SST_{2n}(\varpi_{1})\times SST_{2n}(\varpi_{l-1}),\nonumber\\
&\mbox{ by $(1,(3,4),7,(11,12,13,14),15))=(1,(3,4),7))\sqcup [11,15]$,}\nonumber\\
&\overset{\bigvee^{-1}}\longrightarrow ( 1,(3,4),7,(11,12,13,14),15,16)=\redu^{-1}({\bf b})\in SST_{2n}(\varpi_{l}),\label{solution}
\end{align}
\begin{itemize}
\item We now check with Theorem \ref{th:main}, \eqref{hat2thm}. One has $l=10$,  $r=3$, $t=6$, $t-r-1=6-3-1=2$ and $\b=(1,7)\sqcup[11,14]$ with $b_6=14\in 2\Z$, $b_2=7<b_6-2-1=11<b_6-3=11=b_3$. One has $b_t-r-1=10$ and $b_t-r-2=9$.

Define 
    $$\b\setminus \{b_t,\s(b_t)\}\sqcup\{b_t-r-1\}=(1,7) \sqcup [10, 12]=(1,7,10)\sqcup[11,12]\in \SpT(\varpi_{t-1}), \mbox{ and }$$

$$\overline\a:=\redu_{l-1}^{-1}((b_1, \dots , b_{t-r-1}) \sqcup [b_t-r-1, b_t-2])=\redu_{l-1}^{-1}((1,7,10)\sqcup[11,12])$$
$$=(1,3,4,7)\sqcup [10,14]\in \SST(\varpi_{l-1}) \mbox{ by  \cite[Theorem 3]{azreduction}}.$$

Then \begin{align}&\redu_l^{-1}(\b)=\a=\overline\a\setminus \{b_t-r-1\}\sqcup [b_t-2+s+1, b_t-2+s+2]\nonumber\\
&=(1,3,4,7)\sqcup [11,14] \sqcup[15,16]=\eqref{solution}
\end{align}
such that $0\le s\in 2\Z$ and 
$$\ell(\overline\a\cap [1,b_t-r-2])+r+s=l-1\Leftrightarrow\ell(\overline\a\cap [1,9])+3+s=9\Leftrightarrow 4+3+s=9$$
$$\Leftrightarrow s=2.$$
\end{itemize}
\end{ex}


\bibliography{sample17}
\bibliographystyle{alpha}

\end{document}